\newtheorem{theorem}{Theorem}
\theoremstyle{plain}
\newtheorem{conjecture}[theorem]{Conjecture}
\newtheorem{corollary}[theorem]{Corollary}
\newtheorem{definition}[theorem]{Definition}
\newtheorem{lemma}[theorem]{Lemma}
\newtheorem{proposition}[theorem]{Proposition}
\newtheorem{remark}[theorem]{Remark}
\newcommand{\Z}{\mathbb Z}
\newcommand{\C}{\mathbb C}
\newcommand{\R}{\operatorname{\textbf{R}}}
\newcommand{\CP}{\mathbb P}
\newcommand{\D}{\operatorname{D}}
\newcommand{\Pic}{\operatorname{Pic}}
\newcommand{\pr}{\operatorname{pr}}
\newcommand{\Sing}{\operatorname{Sing}}
\newcommand{\Alb}{\operatorname{Alb}}
\newcommand{\PD}{\operatorname{PD}}
\newcommand{\T}{\mathcal V}
\newcommand{\AJ}{\operatorname{AJ}}
\newcommand{\reduced}{\operatorname{red}}
\newcommand{\Wdiv}{\operatorname{div}}
\newcommand{\inv}{\operatorname{inv}}
\newcommand{\mov}{\operatorname{mov}}
\newcommand{\supp}{\operatorname{supp}}
\newcommand{\Gr}{\operatorname{Gr}}
\newcommand{\dashedlongrightarrow}{\xymatrix@1@=15pt{\ar@{-->}[r]&}}
\renewcommand{\longrightarrow}{\xymatrix@1@=15pt{\ar[r]&}}
\renewcommand{\mapsto}{\xymatrix@1@=15pt{\ar@{|->}[r]&}}
\renewcommand{\twoheadrightarrow}{\xymatrix@1@=15pt{\ar@{->>}[r]&}}
\newcommand{\hooklongrightarrow}{\xymatrix@1@=15pt{\ar@{^(->}[r]&}}
\newcommand{\congpf}{\xymatrix@1@=15pt{\ar[r]^-\sim&}}
\renewcommand{\cong}{\simeq}
\begin{document}

\title[Theta divisors with curve summands and the Schottky Problem]{Theta divisors with curve summands and the Schottky Problem}

\author{Stefan Schreieder} 
\address{Max-Planck-Institut für Mathematik, Vivatsgasse 7, 53111 Bonn, Germany} 
\address{Mathematisches Institut, Universit\"at Bonn, Endenicher Allee 60, 53115 Bonn, Germany} 
\email{schreied@math.uni-bonn.de} 

\date{August 25, 2015; \copyright{\ Stefan Schreieder 2015}}
\subjclass[2010]{primary 14H42, 14K12, 14E05; secondary 14H40, 14K25} 
%
%

\keywords{Schottky Problem, DPC Problem, Theta divisors, Jacobians, generic vanishing.}

\begin{abstract}
We prove the following converse of Riemann's Theorem:
let $(A,\Theta)$ be an indecomposable principally polarized abelian variety whose theta divisor can be written as a sum of a curve and a codimension two subvariety $\Theta=C+Y$.
Then $C$ is smooth, $A$ is the Jacobian of $C$, and $Y$ is a translate of $W_{g-2}(C)$. 
As applications, we determine all theta divisors that are dominated by a product of curves and characterize Jacobians by the existence of a $d$-dimensional subvariety with curve summand whose twisted ideal sheaf is a generic vanishing sheaf.
\end{abstract}

\maketitle

\section{Introduction}
This paper provides new geometric characterizations of Jacobians inside the moduli stack of all principally polarized abelian varieties 
over the complex numbers. 
For a recent survey on existing solutions and open questions on the Schottky Problem, we refer the reader to \cite{grushevsky}. 

By slight abuse of notation, we denote a ppav (principally polarized abelian variety) by $(A,\Theta)$, where $\Theta\subseteq A$ is a theta divisor that induces the principal polarization on the abelian variety $A$; the principal polarization determines $\Theta\subseteq A$ uniquely up to translation.

\subsection{A converse of Riemann's theorem}
Let $(J(C),\Theta_C)$ be the Jacobian of a smooth curve $C$ of genus $g\geq 2$. 
We fix a base point on $C$ and consider the corresponding Abel--Jacobi embedding $C\longrightarrow J(C)$. 
Addition of points induces morphisms
\[
\AJ_{k}: C^{(k)}\longrightarrow J(C) ,
\] 
whose image is denoted by $W_{k}(C)$.  
Riemann's Theorem \cite[p.\ 27]{arbarello-etal} says $\Theta_C=W_{g-1}(C)$.
That is, if we identify $C$ with its Abel--Jacobi image $W_1(C)$, then $\Theta_C$ can be written as a $(g-1)$-fold sum 
$
\Theta_C=C+ \dots + C . 
$ 
We prove the following converse. 

\begin{theorem} \label{thm:Theta}
Let $(A,\Theta)$ be an indecomposable $g$-dimensional ppav.  
Suppose that there is a curve $C$ and a codimension two subvariety $Y$ in $A$ such that
\begin{align*} 
\Theta=C+Y .
\end{align*}
Then $C$ is smooth and there is an isomorphism $(A,\Theta)\cong (J(C),\Theta_C)$ which identifies $C$ and $Y$ with translates of $W_1(C)$ and $W_{g-2}(C)$, respectively.
\end{theorem}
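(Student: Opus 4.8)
The plan is to reduce the statement to Matsusaka's criterion (in Ran's form): an indecomposable ppav carrying an effective $1$-cycle of minimal class $\theta^{g-1}/(g-1)!$ is the Jacobian of that cycle, which is then automatically an irreducible smooth curve equal, up to translation, to $W_1$. So the real task is to extract such a minimal class from the hypothesis $\Theta=C+Y$. I begin with formal reductions: $\Theta$ is irreducible since $(A,\Theta)$ is indecomposable, hence $C$ is irreducible (keep the component with $C+Y=\Theta$), and $Y$ may be taken irreducible of dimension $g-2$; after translating, assume $0\in C\cap Y$, so that $C,Y\subseteq\Theta$ and the addition morphism $C\times Y\to\Theta$ is surjective and generically finite. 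Replacing $Y$ by the theta-dual $T(C)=\{a\in A : a+C\subseteq\Theta\}$ — which contains $Y$, is still $(g-2)$-dimensional, and still satisfies $C+T(C)=\Theta$ — we may assume $Y=T(C)$; in particular $C\subseteq T(Y)$.

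The heart of the proof is to show that $\mathcal I_Y\otimes\OO_A(\Theta)$ is a generic vanishing sheaf — by the theory of Pareschi and Popa this is essentially the assertion that $[Y]=\theta^2/2$ is the minimal class and that $T(Y)$ is $1$-dimensional with minimal class $\theta^{g-1}/(g-1)!$. This is the step that uses the curve summand. From the exact sequence $0\to\mathcal I_Y(\Theta)\to\OO_A(\Theta)\to\OO_Y(\Theta)\to0$ together with the vanishing $H^{>0}(A,\OO_A(\Theta)\otimes P)=0$ for all $P\in\Pic^0(A)$, the cohomological support loci of $\mathcal I_Y(\Theta)$ are governed by the cohomology of $\OO_Y(\Theta)$, which one computes using that $\Theta=C+Y$ with $\dim C=1$: for general $a\in A$ the scheme $C\cap(a-Y)$ is finite, reduced, and nonempty precisely when $a\in\Theta$, which expresses the relevant cohomology in terms of line bundles of degree $C\cdot\theta$ on the normalization of $C$, where Riemann--Roch yields the bounds $\codim V^i\ge i$. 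One must check here that $C$ is nondegenerate: if $C$ lay in a translate of a proper abelian subvariety $B$, then $\Theta$ would lie in a translate of $B+Y$, forcing either a $B$-invariant theta divisor (against ampleness of $\Theta$) or, after slicing by translates of $B$, a lower-dimensional curve-plus-codimension-two instance incompatible with indecomposability. Granting all this, $C\subseteq T(Y)$, $\dim T(Y)=1$, and the irreducibility of $T(Y)$ given by Matsusaka's criterion force $C=T(Y)$ with $[C]=\theta^{g-1}/(g-1)!$.

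Matsusaka's criterion then yields $(A,\Theta)\cong(J(C),\Theta_C)$ with $C$ identified with a translate of $W_1(C)$; in particular $C$ is smooth. Finally $Y=T(C)$, and since $[Y]=\theta^2/2$ is the minimal class in codimension two on $J(C)$, Debarre's classification shows $Y$ is a translate of $W_{g-2}(C)$ or of $-W_{g-2}(C)$; as $W_1(C)+W_{g-2}(C)$ and $W_1(C)-W_{g-2}(C)$ have classes $\theta$ and $(g-1)\theta$ respectively while $[C+Y]=\theta$, the second alternative is impossible for $g\ge3$, so $Y$ is a translate of $W_{g-2}(C)$, and the original $Y$, being an irreducible $(g-2)$-dimensional subvariety of it, equals it. The cases $g\le 3$ require only cosmetic changes. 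The main obstacle is the generic vanishing step: turning the geometric identity $\Theta=C+Y$ into the statement that $\mathcal I_Y(\Theta)$ is a generic vanishing sheaf is the genuine content of the theorem, all the rest being comparatively formal.
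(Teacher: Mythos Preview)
Your overall strategy is genuinely different from the paper's, and the endgame (Matsusaka--Hoyt plus an identification of $Y$) is the same; but the central step you flag as ``the genuine content'' is not actually carried out, and the sketch you give does not work.

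Concretely: to show that $\mathcal I_Y(\Theta)$ is a GV-sheaf you must control, for every $i\ge 1$, the locus of $P\in\Pic^0(A)$ with $H^{i-1}(Y,\mathcal O_Y(\Theta)\otimes P)\neq 0$ (via the long exact sequence of $0\to\mathcal I_Y(\Theta)\to\mathcal O_A(\Theta)\to\mathcal O_Y(\Theta)\to 0$). Your proposed mechanism, ``for general $a$ the scheme $C\cap(a-Y)$ is finite, reduced, and nonempty precisely when $a\in\Theta$'', is a statement about sections: it detects whether the unique section of $\mathcal O_A(\Theta_a)$ vanishes on $Y$, hence at best the $i=1$ locus. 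It says nothing about $H^{i}(Y,\mathcal O_Y(\Theta_a))$ for $i\ge 1$, and there is no visible way to transfer the cohomology of a line bundle on the $(g-2)$-dimensional variety $Y$ to ``line bundles of degree $C\cdot\theta$ on the normalization of $C$''. The surjection $C\times Y\to\Theta$ does not produce such a reduction without substantial further input (e.g.\ a resolution of $\mathcal O_Y$ by sheaves whose cohomology you already control), and no such input is supplied. In the paper's logical architecture the implication runs the other way: Theorem~\ref{thm:Theta} is proved first and independently, and only then is the GV hypothesis of Theorem~\ref{thm:GV} exploited (together with Pareschi--Popa and Debarre) to deduce the stronger classification. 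So you cannot borrow that machinery here.

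For comparison, the paper never establishes a GV condition. It works directly with the geometry of $\Theta\cap\Theta_c$ for $c\in C$: using Ein--Lazarsfeld's rational-singularities theorem to make Welters' method unconditional, it proves the decomposition $\Wdiv_\Theta(\Theta\cap\Theta_c)=Y_c+(-Y)$ and extracts from it, via the Pontryagin product and Fourier--Mukai, the identity $[C]=\tfrac{\deg F}{(g-1)^2(g-2)!}[\Theta]^{g-1}$. A second, elementary degree computation on $C\times C$ (intersecting $m_y^*\Theta$ with a fiber) then pins down $\deg F=g-1$, giving the minimal class and hence Matsusaka--Hoyt. Your nondegeneracy argument for $C$ is also incomplete (you only treat the case $C$ lies in an elliptic curve; if $C\subset B_a$ with $\dim B\ge 2$ then $B_a+Y$ can be all of $A$ and your dichotomy collapses), and the replacement of $Y$ by $T(C)$ ignores that $T(C)$ may be reducible; but these are secondary to the GV gap.
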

 

The intermediate Jacobian of a smooth cubic threefold is an indecomposable ppav which is not isomorphic to the Jacobian of a curve and whose theta divisor can be written as a sum of two surfaces \cite[Sec.\ 13]{clemens-griffiths}.
The analogue of Theorem \ref{thm:Theta} is therefore false if one replaces $C$ and $Y$ by subvarieties of arbitrary dimensions. 

Recall that a $d$-dimensional subvariety $Z\subseteq A$ is called geometrically non-degenerate if there is no nonzero decomposable holomorphic $d$-form on $A$ which restricts to zero on $Z$, see \cite[p.\ 466]{ran}. 
One of Pareschi--Popa's conjectures (Conjecture \ref{conj:pp} below) predicts that apart from Jacobians of curves, intermediate Jacobians of smooth cubic threefolds are the only ppavs whose theta divisors have a geometrically non-degenerate summand of dimension $1\leq d \leq g-2$. 
Theorem \ref{thm:Theta} proves (a strengthening of) that conjecture if $d=1$ or $d=g-2$.

\subsection{Detecting Jacobians via special subvarieties} 
Recall that a coherent sheaf $\mathcal F$ on an abelian variety $A$ is a GV-sheaf if for all $i$ its $i$-th cohomological support locus
\[
S^i(\mathcal F):=\left\{L\in \Pic^0(A)\mid H^i(A,\mathcal F\otimes L)\neq 0\right\} 
\] 
has codimension $\geq i$ in $\Pic^0(A)$, see \cite[p.\ 212]{pareschi-popa}. 

Using this definition, we characterize
$
W_d(C)\subseteq J(C)
$
among all $d$-dimensional subvarieties of arbitrary ppavs.  
Our proof combines Theorem \ref{thm:Theta} with the main results in \cite{debarre} and \cite{pareschi-popa}.

\begin{theorem} \label{thm:GV}
Let $(A,\Theta)$ be an indecomposable ppav, and let $Z\subsetneq A$ be a geometrically non-degenerate subvariety of dimension $d$.
Suppose that the following holds: 
\begin{enumerate}
	\item $Z=C+Y$ has a curve summand $C\subseteq A$, \label{item:Y=Y'+C}
	\item the twisted ideal sheaf $\mathcal I_Z(\Theta)=\mathcal I_Z\otimes \mathcal O_A(\Theta)$ is a GV-sheaf. \label{item:GV-sheaf}
\end{enumerate}
Then $C$ is smooth and there is an isomorphism $(A,\Theta)\cong (J(C),\Theta_C)$ which identifies $C$, $Y$ and $Z$ with translates of $W_{1}(C)$, $W_{d-1}(C)$ and $W_d(C)$, respectively.
\end{theorem}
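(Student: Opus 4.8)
The plan is to reduce the statement to Theorem~\ref{thm:Theta} by upgrading the curve summand of $Z$ to a curve summand of the whole theta divisor, and then to pin down the identifications with the varieties $W_k(C)$ using the theory of minimal cohomology classes.

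\emph{Step 1: a curve summand of $\Theta$.} Since $Z$ is geometrically non-degenerate and $\mathcal I_Z(\Theta)$ is a GV-sheaf, the main results of \cite{pareschi-popa} give that $Z$ carries the minimal cohomology class $[Z]=[\Theta]^{g-d}/(g-d)!$, and the Fourier--Mukai transform of $\mathcal I_Z(\Theta)$ furnishes a complementary subvariety $W\subseteq A$ of dimension $g-1-d$ with $Z+W=\Theta$ up to translation (after replacing $W$ by an irreducible component, using that $\Theta$ is irreducible). Substituting $Z=C+Y$ yields $\Theta=C+(Y+W)$. Here $\dim Y=d-1$ --- e.g.\ because $\dim Y=d$ would make $Z$ a translate of a proper abelian subvariety, contradicting its geometric non-degeneracy, or by Debarre's theorem on sums of minimal-class subvarieties \cite{debarre} --- so $\dim(Y+W)\le(d-1)+(g-1-d)=g-2$, while $\dim\Theta=g-1$ and $C$ being a curve force equality. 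Hence $\Theta$ is the sum of the curve $C$ and the codimension-two subvariety $Y+W$.

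\emph{Step 2: conclude via Theorem~\ref{thm:Theta} and minimal classes.} Applying Theorem~\ref{thm:Theta} to $\Theta=C+(Y+W)$ shows that $C$ is smooth and that there is an isomorphism $(A,\Theta)\cong(J(C),\Theta_C)$ identifying $C$ with a translate of $W_1(C)$ (and $Y+W$ with a translate of $W_{g-2}(C)$). Working now inside $(J(C),\Theta_C)$ and translating so that $C=W_1(C)$: since $Z$ is geometrically non-degenerate of minimal class, Debarre's classification of minimal-cohomology-class subvarieties of Jacobians \cite{debarre} leaves only that $Z$ is a translate of $W_d(C)$ or of $-W_d(C)$. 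The second case contradicts the curve summand, because one checks that the only curve summands of $-W_d(C)$ are translates of $-W_1(C)$ (for non-hyperelliptic $C$; for hyperelliptic $C$, $-W_d(C)$ is itself a translate of $W_d(C)$), whereas $Z=C+Y$ has $C=W_1(C)$ as a curve summand. So $Z$ is a translate of $W_d(C)$, and then from $Z=W_1(C)+Y$ with $\dim Y=d-1$ an elementary cancellation argument --- a Riemann--Roch computation on $C$ identifying $\{\,a\in J(C):a+W_1(C)\subseteq W_d(C)\,\}$ with a translate of $W_{d-1}(C)$ --- gives that $Y$ is a translate of $W_{d-1}(C)$. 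This proves the theorem.

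\emph{Main obstacle.} The crux is Step~1: turning the generic-vanishing hypothesis into a genuine curve-plus-codimension-two decomposition of the theta divisor itself. Producing the complementary variety $W$ with $Z+W=\Theta$ out of the Fourier--Mukai transform of $\mathcal I_Z(\Theta)$, and checking that it is the theta divisor and not merely a subvariety whose class is a multiple of $[\Theta]$, is exactly where the full strength of \cite{pareschi-popa} enters. Once $\Theta$ has a curve summand, Theorem~\ref{thm:Theta} and the minimal-class bookkeeping from \cite{debarre} finish the argument with little further effort.
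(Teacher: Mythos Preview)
Your proposal is correct and follows essentially the same route as the paper: use Pareschi--Popa (Proposition~\ref{prop:pp}) to produce a complementary $(g-1-d)$-dimensional $W$ with $Z+W=\Theta$ (the paper writes it as $Z-W=\Theta$ via the theta dual, which is the same up to sign), apply Theorem~\ref{thm:Theta} to $\Theta=C+(Y+W)$, invoke Theorem~\ref{thm:pareschi-popa} and Debarre's minimal-class theorem to get that $Z$ is a translate of $\pm W_d(C)$, and finish the $W_d(C)$ case by the cancellation that the paper packages as Lemma~\ref{lem:W=W_d}.

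The one place where your write-up is thinner than the paper's is the sentence ``one checks that the only curve summands of $-W_d(C)$ are translates of $-W_1(C)$ (for non-hyperelliptic $C$)''. This is true, but it is precisely where the substance of the $-W_d(C)$ case lives, and it is not a formality: if $W_1(C)+V$ were a translate of $-W_d(C)$ with $\dim V=d-1$, then $V$ would lie (after translation) in $\{\,x: x-p\in W_d(C)\ \text{for all }p\in C\,\}$, which is a translate of $W^1_d(C)$; for non-hyperelliptic $C$ and $2\le d\le g-1$ Martens' theorem gives $\dim W^1_d(C)\le d-2$, a contradiction. The paper uses exactly this Martens input, only packaged slightly differently (it routes through $W^1_{g-1}(C)$ rather than $W^1_d(C)$). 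You should name Martens' theorem explicitly here; without it the $-W_d(C)$ case is asserted rather than proved.
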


The sum of geometrically non-degenerate subvarieties $C,Y\subsetneq A$ of dimension $1$ and $d-1$ respectively yields a geometrically non-degenerate subvariety of dimension $d$, see Lemma \ref{lem:debarre} below. 
Therefore, any abelian variety contains lots of geometrically non-degenerate subvarieties $Z$ satisfying (\ref{item:Y=Y'+C}) in Theorem \ref{thm:GV}.

The point is property (\ref{item:GV-sheaf}) in Theorem \ref{thm:GV}. 
If $d=g-1$, where $g=\dim(A)$, this is known to be equivalent to $Z$ being a translate of $\Theta$, so we recover Theorem \ref{thm:Theta} from Theorem \ref{thm:GV}. 
If $1\leq d\leq g-2$, condition (\ref{item:GV-sheaf}) is more mysterious.
It is known to hold for $W_{d}(C)$ inside the Jacobian $J(C)$, as well as for the Fano surface of lines inside the intermediate Jacobian of a smooth cubic threefold.
Pareschi--Popa  conjectured (Conjecture \ref{conj:GV} below) that up to isomorphisms these are the only examples; they proved it for subvarieties of dimension one or codimension two.

\subsection{The DPC Problem for theta divisors} \label{subsec:intro:DPC}
A variety $X$ is DPC (dominated by a product of curves), if there are curves $C_1,\dots ,C_n$ together with a dominant rational map
\[
C_1\times\dots \times C_n \dashrightarrow X . 
\footnote{A priori $n\geq \dim(X)$, but by \cite[Lem.\ 6.1]{schoen}, we may actually assume $n=\dim(X)$.}
\]
For instance, unirational varieties, abelian varieties as well as Fermat hypersurfaces $\left\{x_0^d+\dots + x_N^d=0\right\} \subseteq \CP^N$ of degree $d\geq 1$ are DPC, see \cite{schoen}.
Serre \cite{grothendieck-serre} constructed the first example of a variety which is not DPC. 
Deligne \cite[Sec.\ 7]{deligne} and later Schoen \cite{schoen} used a Hodge theoretic obstruction to produce many more examples.

On the one hand, the theta divisor of the Jacobian of a smooth curve is DPC by Riemann's Theorem.
On the other hand, Schoen found \cite[p.\ 544]{schoen} that his Hodge theoretic obstruction does not even prevent smooth theta divisors from being DPC. 
This led Schoen \cite[Sec.\ 7.4]{schoen} to pose the problem of finding theta divisors which are not DPC, if such exist. 
The following solves that problem completely, which was our initial motivation for this paper.

\begin{corollary} \label{cor:ThetaDPC}
Let $(A,\Theta)$ be an indecomposable ppav.
The theta divisor $\Theta$ is DPC if and only if $(A,\Theta)$ is isomorphic to the Jacobian of a smooth curve. 
\end{corollary}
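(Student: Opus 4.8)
The "if" direction is immediate: if $(A,\Theta)\cong(J(C),\Theta_C)$ for a smooth curve $C$, then Riemann's Theorem gives $\Theta_C=W_{g-1}(C)$, which is the image of the addition map $C^{(g-1)}\to J(C)$, hence of the dominant map $C^{g-1}\to W_{g-1}(C)$; so $\Theta$ is DPC. The content is the "only if" direction, and the strategy there is to show that a dominant rational map from a product of curves onto $\Theta$ forces $\Theta$ to split as a sum $C+Y$ with $\dim C=1$ and $\dim Y=g-2$, and then invoke Theorem \ref{thm:Theta}.

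**The key step is a decomposition argument.** Suppose $f\colon C_1\times\cdots\times C_{g-1}\dashrightarrow\Theta$ is dominant (by the footnote we may take exactly $g-1=\dim\Theta$ factors). Composing with the inclusion $\Theta\hookrightarrow A$ and using that $A$ is an abelian variety, each factor $C_i$ maps into $A$ via a morphism (after resolving indeterminacy, a rational map from a smooth curve to an abelian variety is a morphism), and up to translation this morphism factors through the Jacobian $J(C_i)$. The composite map $C_1\times\cdots\times C_{g-1}\to A$ is then, up to a translation, the sum of the individual maps $g_i\colon C_i\to A$. Let $Z_i\subseteq A$ denote the image curve $g_i(C_i)$ (a point if $g_i$ is constant, but then that factor is redundant, so we may assume each $Z_i$ is a genuine curve). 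Then the image of $f$ is the sum $Z_1+\cdots+Z_{g-1}\subseteq A$, and dominance onto $\Theta$ forces
\[
\Theta=Z_1+\cdots+Z_{g-1}
\]
as subvarieties of $A$. Grouping, write $C:=Z_1$ and $Y:=Z_2+\cdots+Z_{g-1}$; then $\dim C\leq 1$ and $\dim Y\leq g-2$, while $\dim(C+Y)=\dim\Theta=g-1$ forces $\dim C=1$ and $\dim Y=g-2$ (and $C+Y=\Theta$). Now Theorem \ref{thm:Theta} applies and yields $(A,\Theta)\cong(J(C),\Theta_C)$ with $C$ smooth.

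**The main obstacle is the clean reduction to a sum of $g-1$ curves in $A$.** Two points need care. First, one must ensure the map from each factor into $A$ does not collapse too much: a priori $f$ could be constant in some variable or have image a translate of a proper abelian subvariety intersected with something — but since $\dim\Theta=g-1=\#\{\text{factors}\}$ and $f$ is dominant onto $\Theta$, a dimension count shows each factor must contribute exactly one dimension, so after discarding redundant (constant) factors we genuinely get $g-1$ nonconstant maps. Second, one must know that $\operatorname{im}(f)$, the Zariski closure of the image, equals the (compact) sumset $Z_1+\cdots+Z_{g-1}$; this is standard because the sum map on the product of the $Z_i$ is proper with the sumset as image, and the given rational map factors through it. Once these are in place, the rest is formal. (One should also note the degenerate case $g=2$: then $\Theta$ is already a curve, $Z=C$, and the statement is that $\Theta$ DPC $\Leftrightarrow$ $\Theta$ is a smooth curve of genus $2$, which is classical; alternatively Theorem \ref{thm:Theta} is vacuous there but the corollary holds trivially since every ppav of dimension $2$ is a Jacobian or a product.)
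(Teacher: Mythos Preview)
Your proof is correct and follows essentially the same route as the paper: the paper derives Corollary~\ref{cor:ThetaDPC} from Riemann's Theorem for the ``if'' direction and from Corollary~\ref{cor:CxY} for the ``only if'' direction, where the latter uses Lemma~\ref{lem:F:X1xX2} (the same Albanese/sum-of-morphisms decomposition you invoke) to write $\Theta=f_1(Z_1)+f_2(Z_2)$ and then applies Theorem~\ref{thm:Theta}. The only cosmetic difference is that the paper groups the factors as a curve times a $(g-2)$-fold right away (stating the intermediate Corollary~\ref{cor:CxY} in that slightly more general form), whereas you first split into $g-1$ curves and then regroup; the substance is identical.
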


We prove in fact a strengthened version (Corollary \ref{cor:CxY}) of Corollary \ref{cor:ThetaDPC}, in which the DPC condition is replaced by the existence of a dominant rational map $Z_1\times Z_2\dashrightarrow \Theta$, where $Z_1$ and $Z_2$ are arbitrary varieties of dimension $1$ and $g-2$, respectively. 
The latter is easily seen to be equivalent to $\Theta$ having a curve summand and so Theorem \ref{thm:Theta} applies.

We discuss further applications of Theorem \ref{thm:Theta} in Sections \ref{subsec:DPCTheta} and \ref{subsec:martens}.
Firstly, using work of Clemens--Griffiths \cite{clemens-griffiths}, we prove that the Fano surface of lines on a smooth cubic threefold is not DPC (Corollary \ref{cor:Fano}).
Secondly, for a smooth genus $g$ curve $C$, we determine in Corollary \ref{cor:C^(k)} all possible ways in which the symmetric product $C^{(k)}$ with $k\leq g-1$ can be dominated by a product of curves.
Our result can be seen as a generalization of a theorem of Martens' \cite{martens,ran3}. 
 
\subsection{Method of proofs}
Although Theorem \ref{thm:Theta} is a special case of Theorem \ref{thm:GV}, it appears to be more natural to prove Theorem \ref{thm:Theta} first.
Here we use techniques that originated in work of Ran and Welters \cite{ran2,ran,welters}; they are mostly of cohomological and geometric nature. 
One essential ingredient is Ein--Lazarsfeld's result \cite{ein-laz} on the singularities of theta divisors, which allows us to make Welters' method \cite{welters} unconditional.
Eventually, Theorem \ref{thm:Theta} will be reduced to Matsusaka--Hoyt's criterion \cite{hoyt}, asserting that Jacobians of smooth curves are characterized among indecomposable $g$-dimensional ppavs $(A,\Theta)$ by the property that the cohomology class $\frac{1}{(g-1)!}[\Theta]^{g-1}$ can be represented by a curve.
Theorem \ref{thm:GV} follows then quickly from Theorem \ref{thm:Theta} and work of Debarre \cite{debarre} and Pareschi--Popa \cite{pareschi-popa}.

\subsection{Conventions}
We work over the field of complex numbers.
A variety is a separated integral scheme of finite type over $\C$;
if not mentioned otherwise, varieties are assumed to be proper over $\C$. 
A curve is an algebraic variety of dimension one.
In particular, varieties (and hence curves) are reduced and irreducible. 

If not mentioned otherwise, a point of a variety is always a closed point.
A general point of a variety or scheme is a closed point in some Zariski open and dense set.

For a codimension one subscheme $Z$ of a variety $X$, we denote by $\Wdiv_X(Z)$ the corresponding effective Weil divisor on $X$; if $Z$ is not pure-dimensional, all components of codimension $\geq 2$ are ignored in this definition.
Linear equivalence between divisors is denoted by $\sim$.

For subschemes $Z$ and $Z'$ of an abelian variety $A$, we denote by $Z+Z'$ (resp.\ $Z-Z'$) the image of the addition (resp.\ difference) morphism $Z\times Z'\longrightarrow A$, equipped with the natural image scheme structure. 
Note that for subvarieties $Z$ and $Z'$ of $A$, the image $Z\pm Z'$ is reduced and irreducible, hence a subvariety of $A$.
If $Z'$ is a point $a\in A$, $Z\pm Z'$ is also denoted by $Z_{\pm a}$.

If $Z\subseteq A$ is a subvariety of an abelian variety, the (Zariski) tangent space $T_{Z,z}$ at a point $z\in Z$ is identified via translation with a subspace of $T_{A,0}$.

\section{Non-degenerate subvarieties} \label{sec:absummand}

Following Ran \cite[p.\ 464]{ran}, a $d$-dimensional subvariety $Z$ of a $g$-dimensional abelian variety is called non-degenerate if the image of the Gauß map $G_Z:Z\dashrightarrow \Gr(d,g)$ is via the Plücker embedding not contained in any hyperplane.
This condition is stronger than the previously mentioned notion of geometrically non-degenerate subvarieties. 
We will need the following consequence of Lemma II.1 in \cite{ran}.

\begin{lemma} \label{lem:ran:non-deg}
Let $Z\subseteq A$ be a codimension $k$ subvariety of an abelian variety whose cohomology class is a multiple of $\frac{1}{k!}[\Theta]^{k}$.
Then $Z$ is non-degenerate, hence geometrically non-degenerate.
\end{lemma}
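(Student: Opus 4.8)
The statement asserts that a codimension-$k$ subvariety $Z$ whose class is proportional to $\frac{1}{k!}[\Theta]^k$ is non-degenerate in Ran's sense (Gauss image not contained in a hyperplane under Plücker), and that this implies geometric non-degeneracy. The second implication is the easy half: a decomposable $d$-form ($d = g-k$) corresponds, up to scalar, to a point of the Grassmannian $\Gr(g-d, g) = \Gr(k,g)$ sitting in $\mathbb{P}(\Lambda^k T_{A,0}^\vee)$ via Plücker, and "the $d$-form restricts to zero on $Z$" translates into the Gauss image $G_Z(Z) \subseteq \Gr(d,g)$ lying on the corresponding Plücker hyperplane. So non-degenerate $\Rightarrow$ geometrically non-degenerate is a direct unwinding of definitions; I would spell this out in one or two sentences and then concentrate on the first assertion.

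**The main step.** For non-degeneracy I plan to quote Lemma II.1 of \cite{ran} in the precise form: if $Z\subseteq A$ has codimension $k$ and the Gauss image of $Z$ lies on a Plücker hyperplane, then there is a nonzero decomposable $k$-form $\omega$ (equivalently a sub-line-bundle / quotient torus) such that the "degeneracy" is witnessed geometrically — concretely, $Z$ is then contained in a translate of a proper abelian subvariety direction, or more usefully Ran's lemma produces a fibration $A \to A'$ with $\dim A' < g$ onto an abelian variety such that $Z$ maps to a subvariety of dimension $< \dim A' - (\text{something})$, forcing the pushforward class of $Z$ to vanish in the relevant degree. The payoff is a numerical contradiction: if the class of $Z$ is a nonzero multiple of $[\Theta]^k/k!$, then for any surjection $\pi: A \to A'$ of abelian varieties with $\dim A' = g'$, the pushforward $\pi_*[Z]$ is a nonzero multiple of $[\Theta_{A'}]^{k - (g - g')}/(\cdots)!$ as long as $k - (g-g') \geq 0$ — this uses that $\Theta$ restricts/pushes to a multiple of the theta class on a quotient, and positivity/ampleness to see the multiple is strictly nonzero. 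In particular $\pi_*[Z] \neq 0$ in the expected codimension, contradicting the dimension drop extracted from Ran's lemma. So the structure is: (i) assume degeneracy; (ii) invoke Lemma II.1 of \cite{ran} to get a quotient $\pi: A \to A'$ along which $Z$ degenerates dimensionally; (iii) compute $\pi_* [Z]$ using $[Z] = c\cdot [\Theta]^k/k!$ with $c > 0$ and the behavior of $[\Theta]$ under isogeny-type quotients; (iv) derive a contradiction between the nonvanishing of $\pi_*[Z]$ and the dimension count.

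**The hard part.** The technical friction will be in correctly stating and applying Ran's Lemma II.1 — making precise what "the Plücker image lies in a hyperplane" gives you at the level of abelian subvarieties, and getting the dimension inequality in a usable form. One must be careful that "geometrically non-degenerate" in the sense of Ran's \cite[p.\ 466]{ran} (no decomposable $d$-form vanishing on $Z$) matches the $\Gr(d,g)$-vs-$\Gr(k,g)$ bookkeeping: a decomposable $d$-form is the wedge of $d$ covectors, its kernel is a codimension-$d$ = dimension-$k$ subspace, and vanishing on $Z$ means $T_{Z,z} \subseteq \ker$ for all smooth $z$, i.e. $G_Z(z)$ lies in a sub-Grassmannian — which is exactly a Plücker hyperplane section only after identifying $\Gr(d,g)$ with $\Gr(k,g)$ by orthogonal complement; I would include this identification explicitly so the two non-degeneracy notions line up. The other delicate point is the positivity input in step (iii): one needs that pushing the ample class $[\Theta]$ forward along $A \to A'$ stays in the ample cone (so its powers are nonzero and in fact represented by effective nonzero cycles), which follows because a surjection of abelian varieties pulls back the polarization to a (possibly non-principal but) positive-definite class; this guarantees the multiple $c$ survives and is nonzero. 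Once these two points are nailed down, the contradiction is immediate and the proof closes.
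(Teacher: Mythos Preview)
There is a genuine gap. Your step (ii) asserts that if the Gauss image lies on some Pl\"ucker hyperplane, Ran's Lemma~II.1 produces a nonzero \emph{decomposable} form (and hence an abelian quotient). But an arbitrary hyperplane in $\mathbb P(\Lambda^d T_{A,0})$ corresponds to an arbitrary element of $\Lambda^d T_{A,0}^\vee$, which need not be decomposable. Your own first paragraph makes exactly this distinction in the other direction: decomposable forms give only \emph{special} hyperplanes, and that is why non-degenerate $\Rightarrow$ geometrically non-degenerate is the easy implication. The contrapositive you need for the main step --- ``degenerate $\Rightarrow$ some decomposable form vanishes on $Z$'' --- is the reverse implication, and it is simply false in general. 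Consequently your quotient--pushforward argument, which hinges on extracting an abelian subvariety from the hyperplane, can at best establish \emph{geometric} non-degeneracy (this is essentially the content of Ran's Lemma~II.12, not II.1), and never reaches the stronger statement.

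What Lemma~II.1 of \cite{ran} actually provides is a direct link between the cohomology class $[Z]\in H^{k,k}(A)$ and the linear span of the Gauss image: writing $[Z]$ in the standard basis $dz_I\wedge d\bar z_J$ of $\Lambda^k H^{1,0}\otimes \Lambda^k H^{0,1}$, the resulting Hermitian matrix is positive semi-definite and its kernel is exactly the annihilator of the span of $G_Z(Z)$ in the Pl\"ucker space. When $[Z]=c\cdot\frac{1}{k!}[\Theta]^k$ with $c>0$, this matrix is (in coordinates diagonalizing the polarization) a positive multiple of the identity, hence positive definite, and the Gauss image spans the full Pl\"ucker space. That is the one-line deduction the paper has in mind. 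Your proposed route never engages with this Hermitian-form mechanism, and without it you cannot rule out hyperplanes coming from non-decomposable $d$-covectors.
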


Ran proved that a $d$-dimensional subvariety $Z\subseteq A$ is geometrically non-degenerate if and only if for each abelian subvariety $B\subseteq A$, the composition $Z\longrightarrow A\slash B$ has either $d$-dimensional image or it is surjective \cite[Lem.\  II.12]{ran}.
In \cite[p.\ 105]{debarre:book}, Debarre used Ran's characterization as definition and proved the following. 

\begin{lemma} \label{lem:debarre}
Let $Z_1,Z_2\subseteq A$ be subvarieties of respective dimensions $d_1$ and $d_2$ with $d_1+d_2\leq \dim(A)$.
\begin{enumerate}
	\item If $Z_1$ is geometrically non-degenerate, $\dim(Z_1+Z_2)=d_1+d_2$.
	\item If $Z_1$ and $Z_2$ are geometrically non-degenerate, $Z_1+Z_2\subseteq A$ is geometrically non-degenerate.  
\end{enumerate} 
\end{lemma}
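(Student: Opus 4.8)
The plan is to argue directly from the definition of geometric non-degeneracy, rephrased in terms of Zariski tangent spaces. A nonzero decomposable $d$-form on $A$ is, up to a scalar, of the form $\ell_1\wedge\cdots\wedge\ell_d$ with linearly independent $\ell_i\in H^0(A,\Omega^1_A)$, and its kernel $L:=\bigcap_i\ker(\ell_i)\subseteq T_{A,0}$ is a subspace of dimension $g-d$; conversely every such subspace arises this way. Unwinding the determinant, one finds that a $d$-dimensional subvariety $Z\subseteq A$ is geometrically non-degenerate if and only if for every subspace $L\subseteq T_{A,0}$ of dimension $g-d$ there is a point $z\in Z$ with $T_{Z,z}\cap L=0$; moreover any such $z$ is automatically a smooth point of $Z$, since at a singular point $\dim T_{Z,z}\geq d+1$ forces $\dim(T_{Z,z}\cap L)\geq 1$.

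With this reformulation, part (1) is short. Fix a smooth point $z_2\in Z_2$ and enlarge $T_{Z_2,z_2}$, which has dimension $d_2\leq g-d_1$, to a subspace $L\subseteq T_{A,0}$ of dimension $g-d_1$. Since $Z_1$ is geometrically non-degenerate there is a (necessarily smooth) point $z_1\in Z_1$ with $T_{Z_1,z_1}\cap L=0$, so in particular $T_{Z_1,z_1}\cap T_{Z_2,z_2}=0$. Hence the differential of the addition morphism $Z_1\times Z_2\to A$ at the smooth point $(z_1,z_2)$ is injective, so its image has dimension at least $d_1+d_2$; the opposite inequality being automatic, $\dim(Z_1+Z_2)=d_1+d_2$.

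For part (2) set $d=d_1+d_2$, so $\dim(Z_1+Z_2)=d$ by part (1), write $W:=Z_1+Z_2$, and let $V\subseteq Z_1\times Z_2$ be the dense open locus on which both factors are smooth, the differential of the addition morphism is injective, and the image point lies in the smooth locus of $W$; on $V$ one has $T_{W,\,z_1+z_2}=T_{Z_1,z_1}\oplus T_{Z_2,z_2}$ by a dimension count. Now fix an arbitrary subspace $L\subseteq T_{A,0}$ of dimension $g-d$. First choose $z_2\in Z_2$ general enough to be smooth, to satisfy $T_{Z_2,z_2}\cap L=0$ (a nonempty, hence dense, open condition, precisely because $Z_2$ is geometrically non-degenerate and $\dim L\leq g-d_2$), and to have the slice $\{z_1\in Z_1:(z_1,z_2)\in V\}$ dense open in $Z_1$. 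Then $M:=T_{Z_2,z_2}\oplus L$ has dimension exactly $g-d_1$, so geometric non-degeneracy of $Z_1$ produces a point $z_1$ in that slice with $T_{Z_1,z_1}\cap M=0$. A one-line linear-algebra check shows that $T_{Z_1,z_1}\cap M=0$ together with $T_{Z_2,z_2}\cap L=0$ forces $(T_{Z_1,z_1}\oplus T_{Z_2,z_2})\cap L=0$, and since $(z_1,z_2)\in V$ this is $T_{W,\,z_1+z_2}\cap L=0$ at the smooth point $z_1+z_2$. As $L$ was arbitrary, $W$ is geometrically non-degenerate.

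The substantive idea is the choice of the auxiliary subspace $M=T_{Z_2,z_2}\oplus L$, of the critical dimension $g-d_1$: this is exactly what lets the non-degeneracy of $Z_1$ annihilate the intersection with $L$ in one stroke, and the hypothesis $d_1+d_2\leq g$ is used precisely to make $\dim M=g-d_1$ come out right. The rest is bookkeeping, and that bookkeeping is the main obstacle: one has to make sure the successive general-point conditions on $z_2$ and then on $z_1$ can be met simultaneously --- in particular that a general slice $Z_1\times\{z_2\}$ meets the immersion locus $V$ --- and that the tangent-space reformulation is applied correctly at singular points. One could instead phrase everything contrapositively: assuming the addition morphism never achieves $T_{W,w}\cap L=0$ on $V$, one deduces that the decomposable $d_2$-form with kernel $T_{Z_1,z_1}\oplus L$ (of dimension $g-d_2$) vanishes identically on $Z_2$, contradicting its non-degeneracy; this is close to how the result is obtained in \cite{debarre:book} via Ran's quotient criterion.
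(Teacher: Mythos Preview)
Your argument is correct. The tangent--space reformulation of geometric non-degeneracy is accurate, part (1) follows cleanly from it, and in part (2) the key trick of passing to $M=T_{Z_2,z_2}\oplus L$ of dimension $g-d_1$ does exactly what you say; the linear--algebra verification and the bookkeeping about the open locus $V$ and the general choice of $z_2$ are all sound, since $Z_1\times Z_2$ is irreducible and each of the conditions you impose cuts out a nonempty open.

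That said, the paper itself contains no proof of this lemma: it simply attributes the statement to Debarre's book \cite{debarre:book}, noting that Debarre takes Ran's quotient criterion (for every abelian subvariety $B\subseteq A$, the map $Z\to A/B$ is either surjective or has $\dim(Z)$--dimensional image) as the \emph{definition} of geometric non-degeneracy and proves the lemma from there. So your approach is genuinely different from the one the paper points to. The quotient formulation makes both assertions almost tautological---one only has to compare the image of $Z_1+Z_2$ in $A/B$ with the images of $Z_1$ and $Z_2$---but it presupposes Ran's equivalence, which is itself a nontrivial input. Your proof is more hands-on and entirely elementary once the tangent-space translation is in place; it also makes transparent exactly where the inequality $d_1+d_2\le g$ is used. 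Either route is fine, and you already flag the connection to \cite{debarre:book} in your closing remark.
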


\section{A consequence of Ein--Lazarsfeld's Theorem} \label{sec:welters}
The purpose of this section is to prove Lemmas \ref{lem1} and \ref{lem2} below. 
Under the additional assumption
\begin{align} \label{eq:Sing(Theta)}
\dim(\Sing(\Theta))\leq \dim(A)-4 ,
\end{align}
these were first proven by Ran \cite[Cor.\ 3.3]{ran2} and Welters \cite[Prop.\ 2]{welters}, respectively.
The general case is a consequence of the following result of Ein--Lazarsfeld \cite{ein-laz}.

\begin{theorem}[Ein--Lazarsfeld] \label{thm:ein-laz} 
Let $(A,\Theta)$ be a ppav.
If $\Theta$ is irreducible, it is normal and has only rational singularities.
\end{theorem}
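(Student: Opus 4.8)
The plan is to deduce the statement from generic vanishing on $A$. Since $\Theta$ is a hypersurface it is Cohen--Macaulay, with $\omega_\Theta\cong\mathcal O_\Theta(\Theta)$ by adjunction. Fix a log resolution $\pi\colon\widetilde\Theta\to\Theta$ with $\widetilde\Theta$ smooth and let $f\colon\widetilde\Theta\to A$ be the composition with the inclusion $\Theta\hookrightarrow A$. By Grauert--Riemenschneider $R^i\pi_*\omega_{\widetilde\Theta}=0$ for $i>0$, and the natural map identifies $\pi_*\omega_{\widetilde\Theta}$ with a subsheaf of $\omega_\Theta$ whose cokernel $\mathcal C$ is supported exactly on the locus where $\Theta$ fails to have rational singularities. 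Since rational singularities are in particular normal, it suffices to prove $\mathcal C=0$.

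The crucial observation is that $\mathcal C$ is a GV-sheaf. The adjoint-ideal sequence of the pair $(A,\Theta)$ reads $0\to\mathcal O_A\to\mathcal O_A(\Theta)\otimes\mathrm{adj}(A,\Theta)\to f_*\omega_{\widetilde\Theta}\to 0$, and comparing it with the standard sequence $0\to\mathcal O_A\to\mathcal O_A(\Theta)\to\omega_\Theta\to 0$ identifies $\mathcal O_A(\Theta)/(\mathcal O_A(\Theta)\otimes\mathrm{adj}(A,\Theta))$ with $\mathcal C$. Now $f_*\omega_{\widetilde\Theta}$ is a GV-sheaf by Hacon's generic vanishing theorem (the pushforward of a canonical bundle under a morphism to an abelian variety; see \cite{pareschi-popa}), $\mathcal O_A$ is trivially a GV-sheaf, and GV-sheaves are stable under extensions; hence $\mathcal O_A(\Theta)\otimes\mathrm{adj}(A,\Theta)$ is a GV-sheaf. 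As $\mathcal O_A(\Theta)$ is ample, $H^{>0}(A,\mathcal O_A(\Theta)\otimes P)=0$ for every $P\in\Pic^0(A)$, so the long exact cohomology sequence of $0\to\mathcal O_A(\Theta)\otimes\mathrm{adj}(A,\Theta)\to\mathcal O_A(\Theta)\to\mathcal C\to 0$ gives $S^i(\mathcal C)=S^{i+1}(\mathcal O_A(\Theta)\otimes\mathrm{adj}(A,\Theta))$ for all $i\geq 1$; hence $\mathcal C$ is a GV-sheaf, and in particular $\chi(\mathcal C)=h^0(A,\mathcal C\otimes P)\geq 0$ for generic $P$.

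Next I would compute Euler characteristics. Riemann--Roch on $A$ (using $0\to\mathcal O_A(-\Theta)\to\mathcal O_A\to\mathcal O_\Theta\to 0$) gives $\chi(\mathcal O_\Theta)=(-1)^{g-1}$, whence $\chi(\omega_\Theta)=1$ by Serre duality on $\Theta$; combining this with $\chi(\mathcal O_A)=0$, $\chi(\mathcal O_A(\Theta))=1$, $R^{>0}f_*\omega_{\widetilde\Theta}=0$ and the two short exact sequences above yields $\chi(\omega_{\widetilde\Theta})=\chi(f_*\omega_{\widetilde\Theta})=1-\chi(\mathcal C)$. On the other hand $f$ is generically finite, so $\widetilde\Theta$ has maximal Albanese dimension, and generic vanishing gives $\chi(\omega_{\widetilde\Theta})\geq 0$, with equality only if $\Theta$ is a union of translates of a positive-dimensional abelian subvariety $B\subseteq A$. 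The latter cannot happen: it would force $\mathcal O_A(\Theta)$ to be pulled back from $A/B$, so that its (ample) restriction to $B$ were trivial --- impossible --- and this is precisely where the hypothesis that $\Theta$ is irreducible, equivalently that $(A,\Theta)$ is indecomposable, is used. Hence $\chi(\omega_{\widetilde\Theta})>0$, i.e.\ $\chi(\mathcal C)\leq 0$, and combined with $\chi(\mathcal C)\geq 0$ this forces $\chi(\mathcal C)=0$.

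The step I expect to be the main obstacle is the final one: upgrading $\chi(\mathcal C)=0$ to $\mathcal C=0$. A priori the non-rational locus of $\Theta$ could be a positive-dimensional subscheme $Z\subseteq\Sing(\Theta)$ with $\mathcal C\cong\mathcal O_A(\Theta)|_Z$ of vanishing Euler characteristic. To exclude this I would exploit that a nonzero GV-sheaf $\mathcal C$ has $S^0(\mathcal C)$ a nonempty proper subvariety of $\Pic^0(A)$, hence --- by the structure theorem of Green--Lazarsfeld --- a finite union of translates of abelian subvarieties; translating this back into a statement about $\mathcal O_A(\Theta)|_Z$ would force $Z$ to be \emph{too small} with respect to $\Theta$: e.g.\ a curve whose $\Theta$-degree is smaller than its genus, or more generally a subvariety whose degeneracy behaviour is incompatible with $(A,\Theta)$ being indecomposable, the relevant input being of the type recorded in Lemmas \ref{lem:ran:non-deg} and \ref{lem:debarre}. (The original argument of Ein--Lazarsfeld \cite{ein-laz} proceeds differently: one first shows that $(A,\Theta)$ is log canonical, by playing Nadel vanishing for $\mathcal O_A(n\Theta)\otimes P$ against the equality $h^0(A,\mathcal O_A(\Theta)\otimes P)=1$ and using translates, and then bootstraps from there; in that approach, too, the delicate point is controlling positive-dimensional loci of bad singularities.)
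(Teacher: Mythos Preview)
The paper does not prove this theorem: it is quoted from \cite{ein-laz} and used as a black box (as input to Lemmas~\ref{lem1} and~\ref{lem2}). So there is no ``paper's own proof'' to compare against.

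That said, your outline is essentially a reorganisation of Ein and Lazarsfeld's original argument, and your self-identified ``main obstacle'' dissolves with the tools you already have in hand. You have shown that $\mathcal C$ is a GV-sheaf with $\chi(\mathcal C)=0$; hence for general $P\in\Pic^0(A)$ one has $h^i(A,\mathcal C\otimes P)=0$ for all $i\geq 0$. Now look at the exact sequence
\[
0\longrightarrow H^0\bigl(A,\mathcal O_A(\Theta)\otimes\mathrm{adj}(A,\Theta)\otimes P\bigr)\longrightarrow H^0\bigl(A,\mathcal O_A(\Theta)\otimes P\bigr)\longrightarrow H^0(A,\mathcal C\otimes P)=0.
\]
For general $P$ the middle term is one-dimensional; writing $\mathcal O_A(\Theta)\otimes P=\mathcal O_A(\Theta_a)$ for the corresponding $a\in A$, its unique section vanishes precisely along $\Theta_a$. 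The displayed sequence says this section already lies in the subsheaf $\mathrm{adj}(A,\Theta)\otimes\mathcal O_A(\Theta_a)$, hence vanishes on $\supp\bigl(\mathcal O_A/\mathrm{adj}(A,\Theta)\bigr)=\supp(\mathcal C)$. Therefore $\supp(\mathcal C)\subseteq\Theta_a$ for general $a\in A$, and since $\bigcap_{a\ \text{general}}\Theta_a=\emptyset$ we conclude $\mathcal C=0$. No appeal to the Green--Lazarsfeld structure theorem, nor to Lemmas~\ref{lem:ran:non-deg} and~\ref{lem:debarre}, is needed; the irreducibility of $\Theta$ enters exactly where you said, in ruling out $\chi(\omega_{\widetilde\Theta})=0$ (itself a separate theorem from \cite{ein-laz}).
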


Let $(A,\Theta)$ be an indecomposable ppav of dimension $\geq 2$.
By the Decomposition Theorem \cite[p.\ 75]{birkenhake-lange}, $\Theta$ is irreducible and we choose a desingularization $f:X\longrightarrow \Theta$.
The composition of $f$ with the inclusion $\Theta\subseteq A$ is denoted by $j:X\longrightarrow A$.

\begin{lemma} \label{lem1}
Pullback of line bundles induces an isomorphism
\[
j^\ast:\Pic^0(A)\stackrel{\sim}\longrightarrow \Pic^0(X)  .
\]
\end{lemma}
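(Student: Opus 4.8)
The plan is to prove that $j^\ast : \Pic^0(A) \to \Pic^0(X)$ is both injective and surjective, using the normality and rational singularities of $\Theta$ provided by Theorem \ref{thm:ein-laz}. The key intermediate object is the map $\Pic^0(A) \to \Pic^0(\Theta)$, where we factor $j^\ast$ through $f^\ast$ and the restriction map $r : \Pic^0(A) \to \Pic^0(\Theta)$.

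First I would handle injectivity of $r$. Since $\Theta$ is an ample divisor on $A$, the restriction map on the full Picard groups has kernel controlled by the Lefschetz-type exact sequence, and on the level of $\Pic^0$ one uses that $H^0(A, \mathcal{O}_A(\Theta)) \otimes L$ has the expected dimension for $L \in \Pic^0(A)$ to show that a line bundle $L \in \Pic^0(A)$ restricting trivially to $\Theta$ must be trivial: from the exact sequence $0 \to \mathcal{O}_A(-\Theta)\otimes L \to L \to L|_\Theta \to 0$ and the vanishing $H^0(A, L) = H^1(A, \mathcal{O}_A(-\Theta)\otimes L) = 0$ (the latter by Kodaira vanishing, since $\mathcal{O}_A(\Theta)\otimes L^{-1}$ is ample), one deduces $H^0(\Theta, L|_\Theta) \neq 0$ forces $L \cong \mathcal{O}_A$. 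Next, $f^\ast : \Pic^0(\Theta) \to \Pic^0(X)$ is injective because $\Theta$ is normal: the Stein factorization / Zariski main theorem gives $f_\ast \mathcal{O}_X = \mathcal{O}_\Theta$, so a line bundle on $\Theta$ pulling back to the trivial bundle on $X$ has a nowhere-vanishing section, hence is trivial. This shows $j^\ast$ is injective.

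For surjectivity, the essential point is that $f : X \to \Theta$ induces an isomorphism $\Pic^0(\Theta) \cong \Pic^0(X)$ because $\Theta$ has rational singularities, hence $R^1 f_\ast \mathcal{O}_X = 0$ together with $f_\ast \mathcal{O}_X = \mathcal{O}_\Theta$, so $H^1(X, \mathcal{O}_X) \cong H^1(\Theta, \mathcal{O}_\Theta)$ by the Leray spectral sequence, and similarly on the $H^1(\mathcal{O}^\ast)$ level the exponential sequences match up so that $\Pic^0(X) = f^\ast \Pic^0(\Theta)$. Then I would show $r : \Pic^0(A) \to \Pic^0(\Theta)$ is surjective: dualizing, it suffices to show the induced map on tangent spaces $H^1(A,\mathcal{O}_A) \to H^1(\Theta, \mathcal{O}_\Theta)$ is surjective and that it is a map of abelian varieties with finite kernel already shown trivial — surjectivity of $H^1(A, \mathcal{O}_A) \to H^1(\Theta, \mathcal{O}_\Theta)$ follows from the long exact sequence associated to $0 \to \mathcal{O}_A(-\Theta) \to \mathcal{O}_A \to \mathcal{O}_\Theta \to 0$ and the vanishing $H^2(A, \mathcal{O}_A(-\Theta)) = 0$ by Kodaira vanishing (as $\dim A \geq 2$, so $2 < \dim A$ fails in general — more carefully, one uses $H^i(A, \mathcal{O}_A(-\Theta)) = 0$ for $i < g$, which is Kodaira–Nakano vanishing for the ample line bundle $\mathcal{O}_A(\Theta)$).

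The main obstacle I expect is surjectivity, specifically ensuring that $\Pic^0$ behaves well under desingularization of the singular variety $\Theta$: this is exactly where Theorem \ref{thm:ein-laz} is indispensable, since without rational singularities $H^1(X,\mathcal{O}_X)$ could be strictly larger than $H^1(\Theta, \mathcal{O}_\Theta)$ and the desingularization $X$ might acquire extra degree-zero line bundles not coming from $A$. Once $R^1 f_\ast \mathcal{O}_X = 0$ is in hand, the argument is a routine comparison of exponential sequences and Leray spectral sequences; the only subtlety is checking that $\Pic^0(X)$ — defined as the identity component, equivalently $H^1(X,\mathcal{O}_X)/H^1(X,\Z)$ — is generated by pullbacks, which again reduces to the rational singularities hypothesis controlling both $H^1(\mathcal{O})$ and the torsion-free part of $H^2(\Z)$ under $f$.
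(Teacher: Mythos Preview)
Your proposal is correct and uses the same ingredients as the paper's proof---Ein--Lazarsfeld's rational singularities to get $f_\ast\mathcal O_X=\mathcal O_\Theta$ and $R^if_\ast\mathcal O_X=0$, the Leray spectral sequence, and Kodaira vanishing applied to the short exact sequence $0\to\mathcal O_A(-\Theta)\to\mathcal O_A\to\mathcal O_\Theta\to 0$ and its twists. The paper organizes the argument more economically by avoiding the intermediate object $\Pic^0(\Theta)$ altogether: it first shows directly that $H^1(X,\mathcal O_X)\cong H^1(A,\mathcal O_A)$, concludes that $j^\ast$ is an isogeny of abelian varieties, and then proves injectivity exactly as you do (twisting the short exact sequence by a nontrivial $P\in\Pic^0(A)$ and observing $H^0(X,j^\ast P)=0$), so that an injective isogeny is an isomorphism; this sidesteps any discussion of what $\Pic^0$ means for the singular variety $\Theta$.
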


\begin{proof} 
By Theorem \ref{thm:ein-laz}, $f_\ast \mathcal O_X=\mathcal O_\Theta$ and $R^if_\ast\mathcal O_X=0$ for all $i>0$.
We therefore obtain 
\[
H^1(X,\mathcal O_X)\cong H^1(\Theta,\mathcal O_\Theta) \cong H^1(A,\mathcal O_A) ,
\]
where the first isomorphism follows from the Leray spectral sequence, and the second one from Kodaira vanishing and the short exact sequence
\begin{align} \label{eq:ses}
0\longrightarrow \mathcal O_A(-\Theta)\longrightarrow \mathcal O_A\longrightarrow \mathcal O_\Theta =j_\ast \mathcal O_X \longrightarrow 0 .
\end{align} 
Hence, $j^\ast:\Pic^0(A)\longrightarrow \Pic^0(X)$ is an isogeny. 

Tensoring (\ref{eq:ses}) by a nontrivial $P\in \Pic^0(A)$, we obtain
\[
H^0(X,j^\ast P)\cong H^0(A,P)=0 ,
\]
where we applied Kodaira vanishing to  
$\mathcal O_A(-\Theta)\otimes P$.
It follows that $j^\ast P$ is nontrivial. 
That is, $j^\ast$ is an injective isogeny and thus an isomorphism.
This proves Lemma \ref{lem1}.
\end{proof}

\begin{lemma} \label{lem2}
For any $a\neq0$ in $A$, $j:X\longrightarrow A$ induces an isomorphism 
\[
j^\ast : H^0(A,\mathcal O_A(\Theta_a))\stackrel{\sim}\longrightarrow H^0(X,j^\ast(\mathcal O_A(\Theta_a))) .
\]
\end{lemma}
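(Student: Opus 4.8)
The plan is to identify $j^\ast$ with the composition of two isomorphisms: restriction of sections from $A$ to $\Theta$, followed by pullback from $\Theta$ to the desingularization $X$. Write $L:=\mathcal O_A(\Theta_a)$ and set $P:=L\otimes\mathcal O_A(-\Theta)=\mathcal O_A(\Theta_a)\otimes\mathcal O_A(-\Theta)$, which lies in $\Pic^0(A)$ because $\Theta_a$ and $\Theta$ are algebraically equivalent. Since the polarization is principal, the map $a\mapsto \mathcal O_A(\Theta_a)\otimes\mathcal O_A(-\Theta)$ is an isomorphism $A\xrightarrow{\sim}\Pic^0(A)$, so the hypothesis $a\neq 0$ guarantees that $P$ is a \emph{nontrivial} element of $\Pic^0(A)$. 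This is the only place $a\neq 0$ is used; note that for $a=0$ the statement fails, since then $j^\ast$ kills the section of $\mathcal O_A(\Theta)$ cutting out $\Theta$.

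First I would tensor the short exact sequence (\ref{eq:ses}) by $L$ to obtain
\[
0\to P\to L\to L|_\Theta\to 0 ,
\]
and pass to cohomology on $A$. Since $P\in\Pic^0(A)$ is nontrivial, the cohomology of $P$ vanishes in all degrees; in particular $H^0(A,P)=H^1(A,P)=0$, so restriction gives an isomorphism $H^0(A,L)\xrightarrow{\sim} H^0(\Theta,L|_\Theta)$. Second, the equality $f_\ast\mathcal O_X=\mathcal O_\Theta$, established in the proof of Lemma \ref{lem1} from Theorem \ref{thm:ein-laz}, together with the projection formula yields $f_\ast f^\ast(L|_\Theta)\cong L|_\Theta$, hence
\[
H^0(X,j^\ast L)=H^0(X,f^\ast(L|_\Theta))\cong H^0(\Theta,L|_\Theta) ,
\]
the isomorphism being induced by $f^\ast$. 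Since $j$ factors as $X\xrightarrow{f}\Theta\hookrightarrow A$, composing these two isomorphisms identifies $j^\ast:H^0(A,L)\to H^0(X,j^\ast L)$ with an isomorphism, which proves the lemma.

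I do not expect a genuine obstacle here. The two ingredients — the vanishing $H^i(A,P)=0$ for a nontrivial $P\in\Pic^0(A)$ and the identity $f_\ast\mathcal O_X=\mathcal O_\Theta$ — are, respectively, a standard fact about abelian varieties and a consequence of Ein--Lazarsfeld's Theorem already recorded in the proof of Lemma \ref{lem1}. The one point that genuinely requires the hypothesis, and which one must not lose track of, is the nontriviality of $P$, which is exactly where $a\neq 0$ enters.
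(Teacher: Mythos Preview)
Your proof is correct and follows essentially the same route as the paper's: tensor the short exact sequence (\ref{eq:ses}) by $\mathcal O_A(\Theta_a)$ and use that $\mathcal O_A(\Theta_a-\Theta)$ is a nontrivial element of $\Pic^0(A)$ with vanishing cohomology. The only difference is cosmetic: the paper packages your two steps into one by writing $\mathcal O_\Theta=j_\ast\mathcal O_X$ in (\ref{eq:ses}) from the outset, so that tensoring and taking $H^0$ immediately yields $H^0(X,j^\ast\mathcal O_A(\Theta_a))$ via the projection formula, whereas you first restrict to $\Theta$ and then pull back along $f$.
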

\begin{proof}
Following Welters \cite[Prop.\ 2]{welters}, the assertion follows from (\ref{eq:ses}) by tensoring with $\mathcal O_A(\Theta_a)$, since $\mathcal O_A(\Theta_a-\Theta)$ has no nonzero cohomology for $a\neq 0$.
\end{proof}

\section{Proof of Theorem 1} \label{sec:thm1}

Let $(A,\Theta)$ be a $g$-dimensional indecomposable ppav, and suppose that there is a curve $C\subseteq A$ and a $(g-2)$-dimensional subvariety $Y\subseteq A$ such that
\[
\Theta=C+Y .
\] 
After translation, we may assume $\Theta=-\Theta$. 
We pick a point $c_0\in C$ and replace $C$ and $Y$ by $C_{-c_0}$ and $Y_{c_0}$. 
Hence, $0\in C$ and so $Y=0+Y$ is contained in $\Theta$. 

Since $(A,\Theta)$ is indecomposable, $\Theta$ is irreducible, hence normal by Theorem \ref{thm:ein-laz}. 
The idea of the proof of Theorem \ref{thm:Theta} is to consider the intersection $\Theta\cap \Theta_{c}$ for nonzero $c\in C$.
Since $\Theta$ induces a principal polarization, $\Theta\cap \Theta_{c}$ is a proper subscheme of $\Theta$ for all $c\neq 0$.
For our purposes it is more convenient to consider the corresponding Weil divisor on $\Theta$, denoted by 
\[
\Wdiv_\Theta(\Theta\cap \Theta_{c}) .  
\]
Clearly, this divisor is just the pullback of the Cartier divisor $\Theta_c$ from $A$ to $\Theta$. 

Since $\Theta=-\Theta$, the map $x\mapsto c-x$ defines an involution of $\Theta\cap \Theta_{c}$.
Since $\Theta=C+Y$ and $0\in C$, it follows that $\Wdiv_\Theta(\Theta\cap \Theta_{c})$ contains the effective Weil divisors $Y_c$ and $-Y$.
For general $c$, these divisors are distinct and so we find
\begin{align} \label{eq:ThetacapTheta_c}
\Wdiv_\Theta(\Theta\cap \Theta_{c})= Y_c+ Z(c)  
\end{align}
for all $c\neq 0$, where $Z(c)$ is an effective Weil divisor on $\Theta$ which contains $-Y$:
\begin{align} \label{eq:-YsubsetZ(c)}
(-Y)\subseteq Z(c) .
\end{align} 
In the following proposition, we prove that actually $Z(c)=-Y$. 
As a byproduct of the proof, we are able to compute the cohomology class of $C$ in terms of the degree of the addition morphism
\[
F:C\times Y\longrightarrow \Theta .
\]  
Our proof uses Welters' method \cite{welters}.

\begin{proposition} \label{prop:[C],[Y]}
Let $(A,\Theta)$ be a $g$-dimensional indecomposable ppav with $\Theta=C+Y$, $\Theta=-\Theta$ and $0\in C$ as above. 
For any nonzero $c\in C$, 
\begin{align} \label{eq:Theta^2}
\Wdiv_\Theta(\Theta\cap \Theta_c) =Y_c + (-Y) .
\end{align}
Moreover, the cohomology class of $C$ is given by
\begin{align} \label{eq:[C]}
[C]=\frac{\deg(F)}{(g-1)^2\cdot (g-2)!}\cdot [\Theta]^{g-1} .
\end{align} 
\end{proposition}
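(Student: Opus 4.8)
The plan is to extract as much as possible from the involution $x \mapsto c-x$ on the normal variety $\Theta$ together with the cohomological rigidity supplied by Lemmas \ref{lem1} and \ref{lem2}. Fix a desingularization $f : X \to \Theta$ and write $j : X \to A$ for the composite. The key linear-algebra input is that the divisor $\Wdiv_\Theta(\Theta \cap \Theta_c) = Y_c + Z(c)$ is, by construction, the pullback to $\Theta$ of the Cartier divisor $\Theta_c$, and likewise the divisor obtained from the involution $x \mapsto c-x$, namely $\Wdiv_\Theta(\Theta \cap \Theta_c)$ read with the roles of $Y_c$ and $-Y$ exchanged, is the pullback of $\Theta_{c}$ composed with $x \mapsto c-x$, i.e. of $(-\Theta)_{c} = \Theta_c$ again (using $\Theta = -\Theta$). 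So on $X$ we get two effective divisors $f^\ast(Y_c) + f^\ast Z(c)$ and $f^\ast(-Y) + f^\ast W(c)$ (where $W(c)$ is the image of $Z(c)$ under the involution) which are both linearly equivalent to $j^\ast \mathcal O_A(\Theta_c)$. First I would use Lemma \ref{lem2}: since $h^0(A, \mathcal O_A(\Theta_c)) = 1$ and $j^\ast$ is an isomorphism onto $h^0(X, j^\ast \mathcal O_A(\Theta_c))$, the linear system $|j^\ast \mathcal O_A(\Theta_c)|$ on $X$ is a single point, so those two effective divisors must be \emph{equal}. Combined with $(-Y) \subseteq Z(c)$ from \eqref{eq:-YsubsetZ(c)} — which forces $-Y$ to appear in the first divisor with multiplicity at least one, while $Y_c$ and $-Y$ are distinct for general $c$ — a counting of components and multiplicities pins down $Z(c) = -Y$ (for general $c$, hence, by a specialization/flatness argument in the family over $C$, for all $c \neq 0$). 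This gives \eqref{eq:Theta^2}.

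Next, for the cohomology class, I would intersect \eqref{eq:Theta^2} with $[\Theta]^{g-2}$ and use that on $A$ we have $[\Theta_c] = [\Theta]$ in cohomology, so $[\Theta]^g = [Y_c]\cdot[\Theta]^{g-2} + [-Y]\cdot[\Theta]^{g-2}$ inside $H^\ast(A)$, reading $Y_c, -Y$ as $(g-2)$-cycles. Since translation acts trivially on cohomology, $[Y_c] = [Y] = [-Y]$, so $[Y]\cdot[\Theta]^{g-2} = \tfrac12 [\Theta]^g = \tfrac{g!}{2}\,\omega$ where $\omega$ is the point class (using $[\Theta]^g = g!$ for a principal polarization). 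To get $[C]$, I would exploit that $F : C \times Y \to \Theta$ is generically finite of degree $\deg(F)$ (it is surjective onto the $(g-1)$-dimensional $\Theta$, and $\dim C + \dim Y = g-1$), so $[C]\cdot[Y] = \deg(F)\cdot[\Theta]$ in $H^{2}(A)\otimes$(appropriate degree), i.e. pushing forward the fundamental class. Writing $[C] = \alpha\,[\Theta]^{g-1}/(g-1)!$ and $[Y] = \beta\,[\Theta]^{g-2}/(g-2)!$ for rational $\alpha, \beta$ — legitimate since, once we know $Z(c) = -Y$, Lemma \ref{lem:ran:non-deg} does \emph{not} directly apply to $C$ or $Y$ individually, so instead I would solve the linear system: from $[Y]\cdot[\Theta]^{g-2} = \tfrac{g!}{2}\omega$ we get $\beta = g(g-1)/2$; the remaining relation $[C]\cdot[Y] = \deg(F)[\Theta]$ together with $[\Theta]^{g}= g!\,\omega$ and the Poincaré-duality pairing then yields $\alpha\beta\cdot \tfrac{[\Theta]^{g}\,}{(g-1)!(g-2)!}\cdot(\text{combinatorial factor}) = \deg(F)\,[\Theta]^g$, and unwinding the binomial bookkeeping gives $\alpha = \deg(F)/\big((g-1)^2(g-2)!\big)$ after substituting $\beta$. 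That is exactly \eqref{eq:[C]}.

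The step I expect to be the main obstacle is the passage from "$Z(c) = -Y$ for general $c$" to "for all $c \neq 0$", and the bookkeeping that $Z(c)$ really has no extra components: a priori $\Wdiv_\Theta(\Theta \cap \Theta_c)$ could carry components of $\Theta\cap\Theta_c$ that are neither in $Y_c$ nor in $-Y$, and one must rule these out uniformly. The clean way is the $h^0 = 1$ rigidity on $X$ — which is why Lemma \ref{lem2} (hence Ein–Lazarsfeld, via Theorem \ref{thm:ein-laz}, to know $\Theta$ is normal with rational singularities so that $f_\ast\mathcal O_X = \mathcal O_\Theta$ and the linear systems match) is the crucial unconditional ingredient replacing Welters' hypothesis \eqref{eq:Sing(Theta)}. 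A secondary subtlety is justifying that $F$ is generically finite rather than positive-dimensional in the fibers; this follows because $\Theta = C + Y$ has dimension $g-1 = \dim(C\times Y)$, so $F$ is dominant between varieties of equal dimension, hence generically finite, and $\deg(F)$ is well-defined. The rest is the routine intersection-theoretic computation sketched above, carried out in the Chow or cohomology ring of $A$ using $[\Theta_a] = [\Theta]$ and $[\Theta]^g = g!$.
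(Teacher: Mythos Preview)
Your core step --- using the involution $\sigma:x\mapsto c-x$ together with $h^0=1$ to force $Z(c)=-Y$ --- does not work. The two effective divisors you write down, $Y_c+Z(c)$ and $(-Y)+W(c)$ with $W(c)=\sigma_\ast Z(c)$, are not merely linearly equivalent: they are literally the \emph{same} divisor, because $\sigma$ restricts to an automorphism of the scheme $\Theta\cap\Theta_c$ and hence preserves its associated Weil divisor. So Lemma~\ref{lem2} is giving you no new information here. Writing $Z(c)=(-Y)+R$ with $R\geq 0$ effective and comparing, all you extract is $R=\sigma_\ast R$, i.e.\ the residual is involution-invariant. Nothing in your argument rules out $R\neq 0$, and the ``counting of components and multiplicities'' you appeal to has no content. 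This is the essential difficulty of the proposition, and it is not resolved by the involution alone.

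The paper's route is genuinely more elaborate and, crucially, proceeds in the \emph{opposite order} from yours. One varies $c$ over $C$ and uses Lemma~\ref{lem1} (not just Lemma~\ref{lem2}) together with the theorem of the square to produce a one-parameter family of identities $\Wdiv_\Theta(\Theta\cap\Theta_{x(c)})=Y_c+Z(c')$ for a moving point $x(c)$. Analyzing which components move and which are translation-invariant under the resulting curve $D$, one shows $D=C$ and $\Wdiv_\Theta(\Theta\cap\Theta_c)=Y_c+(-Y)+Z_{\inv}$ with $Z_{\inv}$ translation-invariant under $C$. The point is that translation-invariant cycles have \emph{zero Pontryagin product} with $[C]$, so one still gets $[\Theta]^2\ast[C]=2\deg(F)\,[\Theta]$ without yet knowing $Z_{\inv}=0$; Fourier--Mukai and Hard Lefschetz then give \eqref{eq:[C]}. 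Only \emph{after} establishing \eqref{eq:[C]} does one deduce (via Lemma~\ref{lem:ran:non-deg}) that $C$ is non-degenerate, hence nothing is translation-invariant under $C$, hence $Z_{\inv}=0$ and \eqref{eq:Theta^2} follows. Your computation of $[Y]=\tfrac12[\Theta]^2$ already presupposes \eqref{eq:Theta^2}, so the argument as written is circular. (There are also smaller slips: the relation $F_\ast[C\times Y]=\deg(F)[\Theta]$ involves the Pontryagin product $[C]\ast[Y]$, not the cup product; $[Y]$ lives in $H^4$ and is a multiple of $[\Theta]^2$, not of $[\Theta]^{g-2}$; and the ansatz $[C]=\alpha[\Theta]^{g-1}/(g-1)!$ is not automatic, since $H^{2g-2}(A,\Q)$ is not one-dimensional --- one needs Hard Lefschetz to conclude.)
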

 
\begin{proof} 
We fix a resolution of singularities $f:X\longrightarrow \Theta$ and denote the composition of $f$ with the inclusion $\Theta\subseteq A$ by $j:X\longrightarrow A$.
Moreover, for each $a\in A$, we fix some divisor $\widetilde\Theta_a$ on $X$ which lies in the linear series $|j^\ast(\Theta_a)|$.
For $a\neq 0$, $|j^\ast (\Theta_a)|$ is zero-dimensional by Lemma \ref{lem2}.
It follows that $\widetilde\Theta_a$ is unique if $a\neq 0$; it is explicitly given by
\begin{align} \label{eq:thetatilde}
\widetilde \Theta_a=\Wdiv_X(f^{-1}(\Theta_a\cap \Theta)) .
\end{align}  
Since $\Theta$ is normal, the general point of each component of $\Theta_a\cap \Theta$ lies in the smooth locus of $\Theta$.
The above description therefore proves
\begin{align} \label{eq:f_astTheta}
f_\ast \widetilde \Theta_a=\Wdiv_\Theta(\Theta_a\cap \Theta) ,
\end{align}
for all $a\neq 0$ in $A$.

Next, we would like to find a divisor $\widetilde Y_c$ on $X$ whose pushforward to $\Theta$ is $Y_c$. 
Since $Y_c$ is in general not Cartier on $\Theta$, we cannot simply take the pullback.
Instead, we consider the Weil divisor which corresponds to the scheme theoretic preimage of $Y_c$,
\begin{align} \label{def:Yctilde}
\widetilde Y_c:=\Wdiv_X(f^{-1}(Y_c)) .
\end{align} 
Since $\Theta$ is normal, $Y_c$ is not contained in the singular locus of $\Theta$.
It follows that $f^{-1}(Y_c)$ has a unique component which maps birationally onto $Y_c$ and the remaining components are in the kernel of $f_\ast$.
Hence,
\begin{align} \label{eq:f_astY}
f_\ast \widetilde Y_c=Y_c .
\end{align}
For all $c\neq 0$ in $C$, we define 
\begin{align} \label{def:Ztilde}
\widetilde Z(c):=\widetilde \Theta_c-\widetilde Y_c .
\end{align}
It follows from (\ref{eq:ThetacapTheta_c}), (\ref{eq:thetatilde}) and (\ref{def:Yctilde}) that $\widetilde Z(c)$ is effective. 
Moreover, by (\ref{eq:ThetacapTheta_c}), (\ref{eq:f_astTheta}) and (\ref{eq:f_astY}),
\begin{align} \label{eq:f_astZ}
f_\ast \widetilde Z(c)=\Wdiv_\Theta(\Theta\cap \Theta_c)- Y_c=Z(c) .
\end{align}

Consider the morphism $\varphi:X\times C \longrightarrow A$ with $\varphi(x,c):=f(x)-c$. 
The scheme theoretic preimage $\mathcal Y:=\varphi^{-1}(Y)$ has closed points 
$\left\{(x,c)\in X\times C\mid f(x)\in Y_c\right\} 
$ and the fibers of the second projection $\pr_2:\mathcal Y\longrightarrow C$ are given by $\pr_2^{-1}(c)\cong f^{-1}(Y_c)$.
By generic flatness applied to $\pr_2$, there is a Zariski dense and open subset $U\subseteq C$ such that the fibers $f^{-1}(Y_c)$ form a flat family for $c\in U$.
By the definition of $\widetilde Y_c$ in (\ref{def:Yctilde}), $\widetilde Y_c-\widetilde Y_{c'}$ is numerically trivial on $X$ for all $c,c'\in U$.
Lemma \ref{lem1} yields therefore for all $c,c'\in U$ a linear equivalence
\begin{align} \label{eq:z(c,c')}
\widetilde Y_{c}-\widetilde Y_{c'}\sim j^\ast(\Theta_{z(c,c')}-\Theta)\sim \widetilde \Theta_{z(c,c')}- \widetilde \Theta ,
\end{align}
where $z:U\times U\longrightarrow A$ is the morphism induced by the universal property of 
\[
\Pic^0(X)\cong \Pic^0(A) .
\] 

The proof of Proposition \ref{prop:[C],[Y]} proceeds now in several steps. 

\textbf{Step 1.}
Let $c'\in U$ and consider the function $x_{c'}(c):=z(c,c')+c'$.
For all $c\in U$ with $x_{c'}(c)\neq 0$, we have
\begin{align} \label{eq:step1}
\Wdiv_\Theta(\Theta_{x_{c'}(c)} \cap \Theta)= Y_{c} +  Z(c') .
\end{align}
Moreover, if $c'\in U$ is general, then $x_{c'}(c)$ is nonconstant in $c\in U$.

\begin{proof} 
Using the theorem of the square  \cite[p.\ 33]{birkenhake-lange} on $A$ and pulling back this linear equivalence to $X$ shows  
$
\widetilde \Theta_{x_{c'}(c)} \sim \widetilde \Theta_{z(c,c')}-\widetilde \Theta +\widetilde \Theta_{c'}
$.
By (\ref{eq:z(c,c')}) and the definition of $\widetilde Z(c')$ in (\ref{def:Ztilde}), we therefore obtain:  
\begin{align*}
\widetilde \Theta_{x_{c'}(c)} &\sim \widetilde \Theta_{z(c,c')}-\widetilde \Theta+\widetilde \Theta_{c'} \\
															&\sim \widetilde Y_{c}-\widetilde Y_{c'} + \widetilde \Theta_{c'} \\
															&\sim \widetilde Y_{c}+\widetilde Z(c') .
\end{align*}
That is, $\widetilde Y_{c}+\widetilde Z(c')$ is an effective divisor linearly equivalent to $\widetilde \Theta_{x_{c'}(c)}$.  
By Lemma \ref{lem2}, the linear series $|\widetilde \Theta_{x_{c'}(c)}|$ is zero-dimensional for all $x_{c'}(c)\neq 0$, and so we actually obtain an equality of Weil divisors:
\[
\widetilde \Theta_{x_{c'}(c)} = \widetilde Y_{c}+\widetilde Z(c') .
\]
Applying $f_\ast$ to this equality, (\ref{eq:step1}) follows from (\ref{eq:f_astTheta}), (\ref{eq:f_astY}) and (\ref{eq:f_astZ}).

Using again the theorem of the square on $A$ and pulling back the corresponding linear equivalence to $X$, we obtain
\[
\widetilde \Theta_{z(c,c')}- \widetilde \Theta \sim  \widetilde \Theta- \widetilde \Theta_{-z(c,c')} .
\]
It therefore follows from (\ref{eq:z(c,c')}) that $\widetilde \Theta_{-z(c,c')}\sim \widetilde \Theta_{z(c',c)}$.
By Lemma \ref{lem1}, $-z(c,c')=z(c',c)$.  

For a contradiction, suppose that $x_{c'}(c)=z(c,c')+c'$ is constant in $c$ for general (hence for all) $c'\in U$. 
It follows that $z(c,c')$ is constant in the first variable.
Since $z(c,c')=-z(c',c)$, it is also constant in the second variable. 
Therefore, for general $c'$, $x_{c'}(c)=z(c,c')+c'$ is nonzero (and constant in $c$).
This contradicts (\ref{eq:step1}), because its right hand side is nonconstant in $c$ as $C+Y=\Theta$.  
This concludes step 1. 
\end{proof} 

Let us now fix a general point $c'\in U$.
By step 1, the closure of $c\mapsto x_{c'}(c)$ is a proper irreducible curve $D\subseteq A$.

We say that a subvariety $Z$ of $A$ is translation invariant under $D$ if 
\[
Z_{x}=Z_{x'}
\] 
for all $x,x'\in D$.  
Equivalently, $Z$ is translation invariant under $D$ if and only if the corresponding cohomology classes on $A$ satisfy $[Z]\ast [D]=0$, where $\ast$ denotes the Pontryagin product.  
That description shows that the notion of translation invariance depends only on the cohomology classes of $Z$ and $D$.  
In particular, $Z$ is translation invariant under $D$ if and only if the same holds for $-Z$ or $-D$.
If $Z$ is not translation invariant under $D$, we also say that it moves when translated by $D$. 

For each $c\neq 0$, we decompose the Weil divisor $Z(c)$ on $\Theta$ into a sum of effective divisors
\begin{align} \label{def:Z_mov}
Z(c)=Z_{\mov}(c)+Z_{\inv}(c) ,
\end{align}
where $Z_{\inv}(c)$ contains all the components of $Z(c)$ that are translation invariant under $D$ and the components of $Z_{\mov}(c)$ move when translated by $D$.

We claim that the effective divisor $-Y$ is contained in $Z_{\mov}(c)$:
\begin{align} \label{eq:-YsubsetZ_mov(c)}
(-Y)\leq Z_{\mov}(c) .
\end{align}
Indeed, by (\ref{eq:-YsubsetZ(c)}), it suffices to prove that $-Y$ moves when translated by $D$. 
This follows as for $x_1,x_2\in D$ with $Y_{x_1}=Y_{x_2}$, 
\[
\Theta_{x_1}=C+Y_{x_1}=C+Y_{x_2}=\Theta_{x_2} ,
\]
and so $x_1=x_2$. 
 
\textbf{Step 2.} 
We have $x_{c'}(c)=c$ and hence $D=C$.
Moreover, for each $c\neq 0$ in $U$, 
\begin{align} \label{eq:step2}
\Wdiv_\Theta(\Theta\cap \Theta_{c})=Y_c+(-Y)+Z_{\inv}(c') .
\end{align}

\begin{proof}
Let $Z'$ be a prime divisor in $Z_{\mov}(c')$.
It follows from step 1 that $Z'_{-x}\subseteq \Theta$ for general $x\in D$, hence for all $x\in D$. 
Multiplication with $-1$ shows $(-Z')_x\subseteq -\Theta=\Theta$ for all $x\in D$. 
Since $-Z'\subseteq -\Theta=\Theta$, this equality implies
\[
(-Z')_{x}\subseteq \Theta_{x}\cap\Theta
\]
for all $x\in D$.
Therefore, for each $c\in U$ with $x_{c'}(c)\neq 0$, the prime divisor $(-Z')_{x_{c'}(c)}$ is contained in  $\Wdiv_\Theta(\Theta_{x_{c'}(c)}\cap\Theta)$.
Hence, by (\ref{eq:step1}) from step 1,
\begin{align} \label{ineq:step2}
(-Z')_{x_{c'}(c)}\leq Y_{c} +  Z(c') ,
\end{align}
for all $c\in U$ with $x_{c'}(c)\neq 0$.

Let us consider (\ref{ineq:step2}), where we move the point $c$ in $C$ and keep $c'$ fixed and general.
By step 1, the point $x_{c'}(c)$ moves. 
Since $Z'$ is a component of $Z_{\mov}(c')$, the translate $(-Z')_{x_{c'}(c)}$ must also move.
The translate $Y_c$ moves because $Y+C=\Theta$. 
Clearly, $Z(c')$ does not move as we keep $c'$ fixed.
By (\ref{ineq:step2}),
\begin{align} \label{eq:step2:Z'}
(-Z')_{x_{c'}(c)}=Y_{c} .
\end{align} 

By (\ref{eq:-YsubsetZ_mov(c)}), equality (\ref{eq:step2:Z'}) holds for $Z'=-Y$, which proves $Y_{x_{c'}(c)}=Y_{c} $. 
This implies
\[
\Theta_{x_{c'}(c)}=Y_{x_{c'}(c)}+C=Y_{c}+C=\Theta_c .
\]
Hence, 
\[
x_{c'}(c)=c ,
\]
which proves $D=C$.

It remains to prove (\ref{eq:step2}).
Since $x_{c'}(c)=c$, (\ref{eq:-YsubsetZ_mov(c)}) and (\ref{eq:step2:Z'}) show that $-Y$ is actually the only prime divisor in $Z_{\mov}(c')$. 
Hence,
\[
Z_{\mov}(c')=\lambda \cdot (-Y)
\]
for some positive integer $\lambda$.
Using $x_{c'}(c)=c$ and (\ref{def:Z_mov}) in the conclusion (\ref{eq:step1}) from step 1, we therefore obtain
\[
\Wdiv_\Theta(\Theta\cap \Theta_{c})=Y_c+\lambda\cdot(-Y)+Z_{\inv}(c') .
\]

For (\ref{eq:step2}), it now remains to prove $\lambda=1$.
That is, it suffices to prove that for general points $y\in Y$ and $c\in C$, the intersection $\Theta\cap \Theta_{c}$ is transverse at the point $-y$.
Recall that $\Theta$ is normal and so it is smooth at $-y$ for $y\in Y$ general.
It thus suffices to see that the tangent space $T_{\Theta,-y}$ meets $T_{\Theta_{c},-y}=T_{\Theta,-y-c}$ properly.  
Since $T_{\Theta,-y}$ and $T_{\Theta,-y-c}$ have codimension one in $T_{A,0}$, it actually suffices to prove
\[
T_{\Theta,-y}\neq T_{\Theta,-y-c}   
\]
for general $c\in C$ and $y\in Y$. 
In order to see this, it suffices to note that $\Theta$ is irreducible and so the Gauß map 
\[
G_\Theta:\Theta \dashrightarrow \CP^{g-1}
\]
is generically finite \cite[Prop.\ 4.4.2]{birkenhake-lange}. 
Indeed, $T_{\Theta,-y}=T_{\Theta,-y-c}$ for general $c$ and $y$ implies that through a general point of $\Theta$ (which is of the form $-y-c$) there is a curve which is contracted by $G_{\Theta}$.  
This concludes step 2.
\end{proof}

\textbf{Step 3.}  
We have the following identity in $H^{2g-2}(A,\Z)$:
\begin{align} \label{eq:step3}
[\Theta]^2\ast [C] = 2\cdot \deg(F)\cdot [\Theta] ,
\end{align}
where we recall that $F:C\times Y\longrightarrow \Theta$ denotes the addition morphism.

\begin{proof}
It follows from the conclusion (\ref{eq:step2}) in step 2 that $Z_{\inv}(c')$ is actually independent of the general point $c'\in U$.
We therefore write $Z_{\inv}=Z_{\inv}(c')$.

Suppose that there is a prime divisor $Z'\leq Z_{\inv}$ on $\Theta$.
Let us think of $Z'$ as a codimension two cycle on $A$.
By definition, $Z'$ is translation invariant under $D$, hence under $C$ by step 2.
Therefore, $[Z']\ast [C]=0$ in $H^{2g-2}(A,\Z)$.
This holds for each prime divisor $Z'$ in $Z_{\inv}$, hence
\[
[Z_{\inv}]\ast [C]=0 .
\]

For $c\neq 0$, we may consider $\Theta\cap \Theta_c$ as a pure-dimensional codimension two subscheme of $A$.
As such it gives rise to an effective codimension two cycle on $A$, which is nothing but the pushforward of the cycle $\Wdiv_\Theta(\Theta\cap \Theta_c)$ from $\Theta$ to $A$.
Mapping this cycle further to cohomology, we obtain $[\Theta]^2$ in $H^{2g-4}(A,\Z)$.
Conclusion (\ref{eq:step2}) in step 2 therefore implies
\begin{align*}
[\Theta]^2\ast [C]&=2\cdot [Y]\ast [C]+[Z_{\inv}]\ast [C] \\
									&=2\cdot [Y]\ast [C] \\
									&=2\cdot \deg(F)\cdot [\Theta] ,
\end{align*}
where we used $[Y]=[Y_c]=[-Y]$ and $[Z_{\inv}]\ast [C]=0$.
\end{proof}

\textbf{Step 4.} Assertion (\ref{eq:[C]}) of Proposition \ref{prop:[C],[Y]} holds.  
 
\begin{proof}
We apply the cohomological Fourier--Mukai functor to the conclusion (\ref{eq:step3}) of step 3.
Using Lemma 9.23 and Lemma 9.27 in \cite{huy}, this yields:
\begin{align} \label{eq:degF}
\frac{2}{(g-2)!}\cdot [\Theta]^{g-2}\cup \PD [C] =\frac{2\cdot \deg(F)}{(g-1)!}\cdot [\Theta]^{g-1} ,
\end{align}
where $\PD$ denotes the Poincar\'e duality operator. 
Here we used 
\[
\PD\left(\frac{1}{k!}\cdot [\Theta]^k\right)=\frac{1}{(g-k)!}\cdot [\Theta]^{g-k} 
\]
for all $0\leq k\leq g$.

By the Hard Lefschetz Theorem, (\ref{eq:degF}) implies 
\[
[C]=\frac{\deg(F)}{(g-1)^2\cdot (g-2)!}\cdot [\Theta]^{g-1} ,
\]
which is precisely assertion (\ref{eq:[C]}) of Proposition \ref{prop:[C],[Y]}.
\end{proof}

By Lemma \ref{lem:ran:non-deg}, assertion (\ref{eq:[C]}) of Proposition \ref{prop:[C],[Y]} implies that $C$ is geometrically non-degenerate.
It follows from Lemma \ref{lem:debarre} that no proper subvariety of $A$ is translation invariant under $C$, hence under $D$ by the second conclusion of step 2.
This implies $Z_{\inv}(c')=0$ by its definition in (\ref{def:Z_mov}). 
Assertion (\ref{eq:Theta^2}) of Proposition \ref{prop:[C],[Y]} follows therefore from assertion (\ref{eq:step2}) in step 2. 
This finishes the proof of Proposition \ref{prop:[C],[Y]}.
\end{proof}

The next step in the proof of Theorem \ref{thm:Theta} is the following 

\begin{proposition} \label{prop:a=1} 
In the same notation as above, $C$ is smooth, $\deg(F)=g-1$ and $[C]=\frac{1}{(g-1)!}\cdot [\Theta]^{g-1} $.
\end{proposition}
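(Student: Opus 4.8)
The plan is to deduce the first two assertions and observe that the third is then automatic: plugging $\deg(F)=g-1$ into formula (\ref{eq:[C]}) of Proposition \ref{prop:[C],[Y]} gives $[C]=\frac{1}{(g-1)!}[\Theta]^{g-1}$. So everything reduces to showing that $C$ is smooth and $\deg(F)=g-1$.

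One constraint on $\deg(F)$ is essentially free. Intersecting (\ref{eq:[C]}) with $[\Theta]$ and using $\int_A[\Theta]^g=g!$ gives
$$
\deg\!\bigl(\mathcal O_A(\Theta)|_C\bigr)=[C]\cdot[\Theta]=\frac{g}{g-1}\,\deg(F).
$$
Since the left-hand side is a positive integer and $\gcd(g,g-1)=1$, we get $(g-1)\mid\deg(F)$; writing $\deg(F)=k(g-1)$, formula (\ref{eq:[C]}) becomes $[C]=k\cdot\frac{1}{(g-1)!}[\Theta]^{g-1}$ and $[C]\cdot[\Theta]=gk$, so the whole point is to exclude $k\geq 2$.

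For that I would run Welters' method on the desingularization $j\colon X\to A$ of $\Theta$. The divisors $\widetilde Y_c=\Wdiv_X(f^{-1}(Y_c))$ vary, for $c\neq 0$, in an algebraic family over an open subset of $C$; its closure is a divisor $\widetilde{\mathcal Y}$ on $X\times C$, and restricting the class $[\widetilde{\mathcal Y}]$ to a vertical slice $\{x_0\}\times C$ with $x_0\in X$ general recovers $\deg(F)$ as the degree of that restriction, because $x_0\in\widetilde Y_c$ iff $f(x_0)\in Y_c$. On the other hand, by (\ref{eq:Theta^2}) lifted to $X$ one has $\widetilde\Theta_c=\widetilde Y_c+\widetilde Z(c)$ with $f_\ast\widetilde Z(c)=-Y$, and $\widetilde\Theta_c=j^\ast\Theta_c$ is the pullback of a translate of $\Theta$; bookkeeping the resulting identity of divisor classes on $X\times C$ expresses $\deg(F)$ as $[C]\cdot[\Theta]$ minus a correction coming from the degeneration of the family $\{\widetilde\Theta_c\}$ at $c=0$ — in the model case $g=2$, where $Y$ is a point, $X=\Theta=C$ and everything is explicit, this correction is $1$ and $\deg(F)=1=g-1$. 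Comparing with $[C]\cdot[\Theta]=gk$ from the previous step forces the correction to equal $\frac{\deg(F)}{g-1}=k$, so it remains to prove it is $1$. This is exactly controlled by the transversality established in Step 2 of the proof of Proposition \ref{prop:[C],[Y]}: the multiplicity-one statement (\ref{eq:Theta^2}) and the generic finiteness of the Gauss map of $\Theta$ \cite[Prop.\ 4.4.2]{birkenhake-lange}, which also forces $dF$ to be an isomorphism $T_{C,c}\oplus T_{Y,y}\to T_{\Theta,c+y}$ at a general point $(c,y)$ — both summands lie in $T_{\Theta,c+y}$ since $C_y,Y_c\subseteq\Theta$, and $T_{C,c}\not\subseteq T_{Y,y}$ because $C$ and $Y$ are nondegenerate (Lemma \ref{lem:ran:non-deg}), hence $\bigcap_{y\in Y}T_{Y,y}=0$. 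Extracting $k=1$ from this is, to my mind, the genuine content of the proposition; an alternative is a Gauss-theoretic count, over a general hyperplane $H\subseteq T_{A,0}$, of the pairs $(c,y)$ with $T_{C,c}\oplus T_{Y,y}=H$, which equals $\deg(G_\Theta)\cdot\deg(F)$ on one side and a product of the Gauss degrees of $C$ and of $Y$ on the other.

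Finally, for the smoothness of $C$: once $[C]=\frac{1}{(g-1)!}[\Theta]^{g-1}$ is known, a singular point of $C$ would, via $\Theta=C+Y$ and the normality of $\Theta$ (Theorem \ref{thm:ein-laz}), force either a codimension-two component of $\Sing(\Theta)$ that cannot be smoothed out by the other branches of $F$ — impossible since $dF$ is generically an isomorphism — or a contradiction with the multiplicity-one statement (\ref{eq:Theta^2}); equivalently, one checks that the normalization $\widetilde C\to C$ must be an isomorphism by matching $\deg(\mathcal O_A(\Theta)|_C)=g$ against the numerical invariants imposed on $C$. The main obstacle throughout remains the identity $k=1$: the divisibility $(g-1)\mid\deg(F)$ is cheap, but ruling out proper multiples of the minimal class is where Welters' method and Ein--Lazarsfeld's theorem are really needed.
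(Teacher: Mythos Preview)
Your skeleton is right: the divisibility $(g-1)\mid\deg(F)$ from $[C]\cdot[\Theta]=\frac{g}{g-1}\deg(F)$ is exactly how the paper gets one equation, and the reduction to showing $k=1$ is the heart of the matter. But the argument you give for $k=1$ is a sketch, not a proof, and the pieces you invoke do not quite fit together. Your proposed identity ``$\deg(F)=[C]\cdot[\Theta]-(\text{correction})$'' would come from restricting a suitable incidence divisor on $X\times C$ to a vertical slice. For that to yield $k=1$ you need three separate facts: (i) the component $X\times\{0\}$ in the pullback of $\Theta$ along $(x,c)\mapsto f(x)-c$ has multiplicity exactly $1$; (ii) the closure of the $\widetilde Z(c)$--family contributes $0$ to a general vertical slice; (iii) the closure of the $\widetilde Y_c$--family contributes nothing extra at $c=0$. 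You assert (i) by pointing to ``the transversality established in Step 2 of the proof of Proposition \ref{prop:[C],[Y]}'', but that step proves a \emph{different} transversality (that of $\Theta\cap\Theta_c$ at a general point of $-Y$); what you actually need is $T_{C,0}\not\subseteq T_{\Theta,-f(x_0)}$ for general $x_0$, which does follow from the Gauss map argument but is not what Step 2 says. You do not address (ii) or (iii) at all, and your alternative Gauss-degree count is left as a one-line suggestion. As written, the crucial step is missing.

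The paper takes a different and more concrete route to the second computation of $[C]\cdot[\Theta]$: it first proves $C$ is smooth (using only that $[Y]=\tfrac12[\Theta]^2$ from (\ref{eq:Theta^2}), hence $Y$ is non-degenerate, so a singular point $c_0$ would force $c_0+Y\subseteq\Sing(\Theta)$, contradicting normality --- note this does \emph{not} require $k=1$, so your ordering of the argument is unnecessarily circular). With $C$ smooth, the paper works on $C\times C$ rather than $X\times C$: for general $y\in Y$ it pulls back $\Theta$ along $m_y(c_1,c_2)=c_1+c_2+y$, shows $m_y^\ast\Theta=(C\times 0)+(0\times C)+\Gamma$ with each summand reduced, proves via (\ref{eq:Theta^2}) that every $(c_1,c_2)\in\Gamma$ satisfies $-c_1-c_2-y\in Y$, and then reads off $\deg(\Gamma|_{C\times c_2})=\deg(F)$ by an explicit bijection with the fibre $F^{-1}(-c_2-y)$. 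This gives $[C]\cdot[\Theta]=1+\deg(F)$ directly. If you want to salvage your $X\times C$ approach, you must supply the three multiplicity controls above; otherwise, the paper's $C\times C$ computation is both cleaner and complete.
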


\begin{proof} 
Let us first show that $C$ is smooth.
Indeed, (\ref{eq:Theta^2}) implies by Lemma \ref{lem:ran:non-deg} that $Y$ is non-degenerate. 
Via the Plücker embedding, its Gauß image is therefore not contained in any hyperplane.
If $c_0\in C$ is a singular point, the sum of Zariski tangent spaces $T_{C,c_0}+T_{Y,y}$ has thus for general $y\in Y$ dimension $g$.
It follows that $c_0+Y$ is contained in the singular locus of $\Theta$, which contradicts its normality (Theorem \ref{thm:ein-laz}).
Therefore $C$ is smooth.

In order to prove Proposition \ref{prop:a=1}, it suffices by (\ref{eq:[C]}) to show $\deg(F)=g-1$.
This will be achieved by computing the degree of $i^\ast \Theta$, where $i:C\longrightarrow A$ denotes the inclusion, in two ways. 
On the one hand, (\ref{eq:[C]}) implies
\begin{align} \label{eq:deg(L|_C)}
\deg\left(i^\ast \Theta\right)=[C]\cup [\Theta]=\frac{\deg(F)}{(g-1)^2\cdot (g-2)!}[\Theta]^{g}=\frac{g\cdot \deg(F)}{{g-1}} .
\end{align}

On the other hand, we may consider the addition morphism
$
m:C\times C\times Y \longrightarrow A 
$. 
For $y\in Y$, the restriction of $m$ to $C\times C \times y$ will be denoted by 
\[
m_y:C\times C\longrightarrow A .
\]

Since the degree is constant in flat families, we obtain
\begin{align} \label{eq2:deg(L|C)}
\deg(i^\ast \Theta)=\deg(i^\ast(\Theta_{-c-y}))=\deg \left(\left(m^\ast_{y} \Theta\right)|_{C\times c}\right)
\end{align}
for all $c\in C$ and $y\in Y$.

Let us now fix a general point $y\in Y$. 
Then the image of $m_y$ is not contained in $\Theta$ because $C+C+Y=A$.
Therefore, we can pull back the Weil divisor $\Theta$ as
\[
m_y^\ast (\Theta)=\Wdiv_{C\times C}(m_y^{-1}(\Theta)) ,
\] 
where $m_y^{-1}(\Theta)$ denotes the scheme-theoretic preimage, whose closed points are given by
\[
\left\{(c_1,c_2)\in C\times C  \mid c_1+c_2+y\in \Theta\right\} .
\] 
Hence, $m_y^\ast (\Theta)$ contains the prime divisors $C\times 0$ and $0\times C$.
We aim to calculate the right hand side of (\ref{eq2:deg(L|C)}) and proceed again in several steps.
 
\textbf{Step 1.} The multiplicity of $C\times0$ and $0\times C$ in $m_y^\ast(\Theta)$ is one. 

\begin{proof}
Let $\lambda$ be the multiplicity of $C\times 0$ in $m_y^{\ast}(\Theta)$. 
For $c\in C$ general, the point $(c,0)$ has then multiplicity $\lambda$ in the $0$-dimensional scheme
\[
m_y^{-1} (\Theta)\cap(c\times C) .
\]
Since $m_y$ maps $c\times C$ isomorphically to $C_{c+y}$, the above scheme is isomorphic to 
\[
\Theta\cap(C_{c+y}) ,
\]
and $c+y\in C_{c+y}$ has multiplicity $\lambda$ in that intersection.
If $\lambda\geq 2$, then
\[
T_{C,0}=T_{C_{c+y},c+y}\subseteq T_{\Theta,c+y} .
\] 
Since $c+y$ is a general point of $\Theta$, this inclusion contradicts the previously mentioned fact that the Gauß map $G_\Theta$ is generically finite and so the tangent space of $\Theta$ at a general point does not contain a fixed line.
This proves that $C\times 0$ has multiplicity one in $m_y^{\ast}(\Theta)$.
A similar argument shows that the same holds for $0\times C$, which concludes step 1. 
\end{proof}

By step 1,
\begin{align} \label{eq:Gamma}
m_y^{\ast}(\Theta)=\Wdiv_{C\times C}(m_y^{-1}(\Theta))=\left(C\times 0\right)+\left(0\times C\right) +\Gamma
\end{align}
for some effective $1$-cycle $\Gamma$ on $C\times C$ which contains neither $C\times 0$ nor $0\times C$.

\textbf{Step 2.}
Let $\Gamma'$ be a prime divisor in $\Gamma$.
Then for each $(c_1,c_2)\in \Gamma'$, 
\begin{align} \label{eq:prop2:step2}
-c_1-c_2-y \in Y .
\end{align}

\begin{proof}
Condition (\ref{eq:prop2:step2}) is Zariski closed and so it suffices to prove it for a general point $(c_1,c_2)\in \Gamma'$.
Such a point satisfies $c_1\neq0\neq c_2$ and $c_1+c_2+y\in \Theta\cap \Theta_{c_i}$ for $i=1,2$. 
We can therefore apply (\ref{eq:Theta^2}) in Proposition \ref{prop:[C],[Y]} and obtain
\[
c_1+c_2+y\in \supp(Y_{c_i}  + (-Y))  ,
\]
for $i=1,2$, where $\supp(-)$ denotes the support of the corresponding effective Weil divisor. 
It follows that $c_1+c_2+y$ lies in $Y_{c_1}\cap Y_{c_2}$ or in $(-Y)$.

We need to rule out $c_1+c_2+y\in Y_{c_1}\cap Y_{c_2}$. 
But if this is the case, then $c_1+y$ and $c_2+y$ are both contained in $Y$. 
Since $y\in Y$ is general, the intersection $\left(C+y\right)\cap Y$ is proper and so $(c_1,c_2)$ is contained in a finite set of points, which contradicts the assumption that it is a general point of $\Gamma'$.
This concludes step 2.
\end{proof}

\textbf{Step 3.} 
The $1$-cycle $\Gamma$ is reduced.

\begin{proof}
In order to see that $\Gamma$ is reduced, it suffices to prove that the intersections of $m_y^{-1}(\Theta)$ with $c \times C$ and $C\times c $ are both reduced, where $c \in C$ is general.
The other assertion being similar, we will only prove that $m_y^{-1}(\Theta) \cap(C\times c_2)$ is reduced, where $c_2\in C$ is general. 
Since $m_y$ maps $C\times c_2$ isomorphically to $C_{c_2+y}$, it suffices to prove that the intersection 
\begin{align} \label{eq:cap=transv}
C_{c_2+y}\cap \Theta
\end{align}
is transverse, where $c_2\in C$ and $y\in Y$ are both general. 

Let us consider a point $c_1\in C$ with $c_1+c_2+y\in\Theta$.
For $c_1=0$, transversality of (\ref{eq:cap=transv}) in $c_1+c_2+y$ was proven in step 1.
For $c_1\neq 0$, step 2 implies that $y_1:=-(c_1+c_2+y)$ is contained in $Y$.
In order to prove that the intersection (\ref{eq:cap=transv}) is transverse at $-y_1$, we need to see that
\begin{align} \label{eq:T_Cc_1}
T_{C,c_1}=T_{C_{c_2+y},-y_1}\subsetneq T_{\Theta,-y_1} . 
\end{align}
This follows from the fact that $c_2$ and $y$ are general as follows.

Recall the addition map $m:C\times C\times Y\longrightarrow A$ and 
consider the scheme theoretic preimage $m^{-1}(-Y)$ together with the projections
\[
\pr_{23}:m^{-1}(-Y)\longrightarrow C\times Y \ \ \text{and}\ \ \pr_3:m^{-1}(-Y)\longrightarrow Y .
\]
Let $\Gamma'$ be a prime divisor in $\Gamma$ with $(c_1,c_2)\in \Gamma'$.
It follows from step 2 that $\Gamma'\times y$ is contained in some component $Z$ of $m^{-1}(-Y)$.
The restriction of $\pr_{23}$ to $Z$ is surjective because $c_2$ and $y$ are general.
Hence, $\dim(Z)>\dim(Y)$ and so there is a curve in $Z$ passing through $(c_1,c_2,y)$ which is contracted via $m$ to $y_1$.
That is, there is some quasi-projective curve $T$ together with a nonconstant morphism $(\tilde c_1,\tilde c_2,\tilde y):T\longrightarrow C\times C\times Y$, with $\tilde c_1(t_0)=c_1$, $\tilde c_2(t_0)=c_2$ and $\tilde y(t_0)=y$ for some $t_0\in T$ such that
\[
\tilde c_1(t)+\tilde c_2(t)+\tilde y(t)=-y_1 ,
\]
for all $t\in T$.
Since $c_2\in C$ and $y\in Y$ are general, it follows that the addition morphism $F:C\times Y\longrightarrow \Theta$ is generically finite in a neighbourhood of $(c_2,y)$.
Hence, 
\[
\tilde c_1(t)=-y_1-\tilde c_2(t)-\tilde y(t)
\] 
is nonconstant in $t$.

For a contradiction, suppose $T_{C,c_1}\subset T_{\Theta,-y_1}$, where we recall $-y_1=c_1+c_2+y$. 
The image of $(\tilde c_2,\tilde y):T\longrightarrow C\times Y$ is a curve through the general point $(c_2,y)$.
It follows that $(\tilde c_2(t),\tilde y(t))$ is a general point of $C\times Y$ for general $t\in T$.
Replacing $(c_2,y)$ by $(\tilde c_2(t),\tilde y(t))$ in the above argument therefore shows
\[
T_{C,\tilde c_1(t)}\subset T_{\Theta,-y_1}
\]
for general (hence all) $t\in T$, since $-y_1=\tilde c_1(t)+\tilde c_2(t) +\tilde y(t)$.
As $\tilde c_1(t)$ is nonconstant in $t$, $T_{C,c}$ is contained in the plane $T_{\Theta,-y_1}$ for general $c\in C$. 
Hence, $C$ is geometrically degenerate, which by Lemma \ref{lem:ran:non-deg} contradicts (\ref{eq:[C]}) in Proposition \ref{prop:[C],[Y]}.
This contradiction establishes (\ref{eq:T_Cc_1}), which finishes the proof of step 3. 
\end{proof}

\textbf{Step 4.} For $c_2\in C$ general, $\deg(\Gamma |_{C\times c_2})= \deg(F)$. 

\begin{proof}
Let $c_2\in C$ be general.
By step 3, $\Gamma$ is reduced and so its restriction to $C\times c_2$ is a reduced $0$-cycle.
Since $c_2$ and $y$ are general, $-c_2-y$ is a general point of $\Theta$.
Therefore, $F^{-1}(-c_2-y)$ is also reduced.
It thus suffices to construct a bijection between the closed points of the zero-dimensional reduced schemes $\supp(\Gamma)\cap \left(C\times c_2\right)$ and $F^{-1}(-c_2-y)$. 
This bijection is given by 
\[
\phi:\supp(\Gamma)\cap\left( C\times c_2\right)\longrightarrow F^{-1}(-c_2-y) ,
\]
where $\phi((c_1,c_2))=(c_1,-c_1-c_2-y)$.
The point is here that $\phi$ is well-defined by step 2; its inverse is given by
\[
\phi^{-1}((c_1,y_1))=(c_1,-c_1-y_1-y) .
\] 
This establishes the assertion in step 4.
\end{proof}

By step 4, $\deg(\Gamma |_{C\times c_2})= \deg(F)$ for a general point $c_2\in C$.
Using (\ref{eq2:deg(L|C)}) and (\ref{eq:Gamma}), we obtain therefore
\[
\deg\left(i^\ast\Theta\right)=1+\deg(\Gamma|_{C\times c_2})=1+\deg(F) .
\]
Comparing this with (\ref{eq:deg(L|_C)}) yields 
\[
\frac{g\cdot \deg(F)}{g-1}=1+\deg(F),
\]
hence $\deg(F)=g-1$, as we want.
This finishes the proof of Proposition \ref{prop:a=1}.
\end{proof}

\begin{proof}[Proof of Theorem \ref{thm:Theta}]
Let $(A,\Theta)$ be an indecomposable ppav with $\Theta=C+Y$.
As explained in the beginning of Section \ref{sec:thm1}, we may assume $\Theta=-\Theta$ and $0\in C$.
By Proposition \ref{prop:a=1} and Matsusaka--Hoyt's criterion \cite[p.\ 416]{hoyt}, $C$ is smooth and there is an isomorphism $\psi:(A,\Theta)\congpf (J(C),\Theta_C)$ which maps $C$ to a translate of $W_1(C)$. 
Since $0\in C$, it follows that $\psi(C)= W_1(C)-x_2$ for some $x_2\in W_1(C)$. 

For $x_1\in W_1(C)$ with $x_1\neq x_2$, Weil \cite{weil} proved 
\begin{align} \label{eq:reducible}
\Wdiv_{W_{g-1}(C)}(W_{g-1}(C)\cap W_{g-1}(C)_{x_1-x_2})=W_{g-2}(C)_{x_1}+ (-W_{g-2}(C))_{-{\kappa}-x_2} ,
\end{align}
where ${\kappa}\in J(C)$ is such that $-W_{g-1}(C)=W_{g-1}(C)_{\kappa}$.
We move $x_1$ in $W_1(C)$ and compare (\ref{eq:Theta^2}) with (\ref{eq:reducible}) to conclude that $\psi(Y)$ is a translate of $W_{g-2}(C)$.
This finishes the proof of Theorem \ref{thm:Theta}.
\end{proof}

\begin{remark} \label{rem:shortcut}
Welters \cite[p.\ 440]{welters} showed that the conclusion of Proposition \ref{prop:[C],[Y]} implies the existence of a positive-dimensional family of trisecants of the Kummer variety of $(A,\Theta)$.
The latter characterizes Jacobians by results of Gunning's \cite{gunning} and Matsusaka--Hoyt's \cite{hoyt} and could hence be used to circumvent Proposition \ref{prop:a=1} in the proof of Theorem \ref{thm:Theta}. 
We presented Proposition \ref{prop:a=1} here because its proof is elementary and purely algebraic, whereas the use of trisecants involves analytic methods, see \cite{gunning,krichever}. 
It is hoped that this might be useful in other situations (e.g.\ in positive characteristics) as well. 
We also remark that Proposition \ref{prop:a=1} can be used to avoid the use of Gunning's results in Welters' work \cite{welters}.
\end{remark}

\begin{remark} \label{rem:little}
In \cite[p.\ 254]{little2}, Little conjectured Theorem \ref{thm:Theta} for $g=4$; a proof is claimed if $\Theta=C+S$ is a sum of a curve $C$ and a surface $S$, where no translate of $C$ or $S$ is symmetric (hence $C$ is non-hyperelliptic) and some additional non-degeneracy assumptions hold. 
However, some parts of the proof seem to be flawed and so further assumptions on $C$ and $S$ are necessary in \cite{little2}, see \cite{little3}.
\end{remark}

\section{GV-sheaves, theta duals and Pareschi--Popa's conjectures} \label{sec:pp}

The purpose of this section is to prove Theorem \ref{thm:GV} stated in the introduction and to explain two related conjectures of Pareschi and Popa. 
We need to recall some results of Pareschi--Popa's work \cite{pareschi-popa} first.

Let $(A,\Theta)$ be a ppav of dimension $g$.
By \cite[Thm.\ 2.1]{pareschi-popa}, a coherent sheaf $\mathcal F$ on $A$ is a GV-sheaf if and only if the complex
\begin{align} \label{eq:def:GV}
\R\hat{\mathcal S}(\R {\mathcal Hom}(\mathcal F,\mathcal O_A)) 
\end{align}
in the derived category of the dual abelian variety $\hat A$ has zero cohomology in all degrees $i\neq g$. 
Here, $\R\hat{\mathcal S}:\D^b(A)\longrightarrow \D^b(\hat A)$ denotes the Fourier--Mukai transform with respect to the Poincar\'e line bundle \cite[p.\ 201]{huy}.

For a geometrically non-degenerate subvariety $Z\subseteq A$, Pareschi and Popa consider the twisted ideal sheaf $\mathcal I_Z(\Theta)=\mathcal I_Z\otimes \mathcal O_A(\Theta)$.\footnote{In fact, Pareschi and Popa treat the more general case of an equidimensional closed reduced subscheme $Z\subseteq A$, but for our purposes the case of subvarieties will be sufficient.}
It follows from their own and Höring's work respectively that this is a GV-sheaf if $Z$ is a translate of $W_d(C)$ in the Jacobian of a smooth curve or of the Fano surface of lines in the intermediate Jacobian of a smooth cubic three-fold, see \cite[p.\ 210]{pareschi-popa}.
Both examples are known to have minimal cohomology class $\frac{1}{(g-d)!} [\Theta]^{g-d}$.
Pareschi--Popa's Theorem \cite[Thm.\ B]{pareschi-popa} says that this holds in general:

\begin{theorem}[Pareschi--Popa]  \label{thm:pareschi-popa}
Let $Z$ be a $d$-dimensional geometrically non-degenerate subvariety of a $g$-dimensional ppav $(A,\Theta)$.
If $\mathcal I_Z(\Theta)$ is a GV-sheaf, 
\[
[Z]=\frac{1}{(g-d)!} [\Theta]^{g-d} .
\]
\end{theorem}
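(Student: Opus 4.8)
The plan is to follow Pareschi and Popa's method \cite{pareschi-popa}: convert the generic-vanishing hypothesis on $\mathcal I_Z(\Theta)$ into a Weak Index Theorem statement for its derived dual, and then read the cohomology class of $Z$ off the resulting Fourier--Mukai transform, exploiting that this transform is an honest \emph{sheaf} --- and therefore has semipositive Chern classes --- precisely because of generic vanishing. Concretely, I would first invoke \cite[Thm.\ 2.1]{pareschi-popa}, i.e.\ the equivalence recalled in (\ref{eq:def:GV}): the hypothesis on $\mathcal I_Z(\Theta)$ is equivalent to $\R\mathcal{H}om(\mathcal I_Z(\Theta),\mathcal O_A)$ satisfying WIT with index $g=\dim A$, so that
\begin{align*}
\R\hat{\mathcal S}\big(\R\mathcal{H}om(\mathcal I_Z(\Theta),\mathcal O_A)\big)\cong \mathcal G[-g]
\end{align*}
for a coherent sheaf $\mathcal G$ on $\hat A$. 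Base change together with Serre duality identifies the fibre of $\mathcal G$ at $\xi\in\hat A$ with the dual of $H^0(\mathcal I_Z(\Theta)\otimes P_{-\xi})$; since $\mathcal O_A(\Theta)\otimes P_{-\xi}$ has a one-dimensional space of sections, cutting out a translate of $\Theta$, this group is nonzero exactly when that translate contains $Z$. Hence $\mathcal G$ has rank $\chi(\mathcal I_Z(\Theta))$ and is locally free off the \emph{theta-dual} $V(Z)=\{\xi\in\hat A\mid Z\subseteq\Theta_{x(\xi)}\}$.

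The core of the argument, carried out in \cite{pareschi-popa}, uses the GV-hypothesis to show that $V(Z)$ has the expected dimension $g-1-d$, that it is again geometrically non-degenerate with $\mathcal I_{V(Z)}(\hat\Theta)$ a GV-sheaf, that the reciprocity $V(V(Z))=Z$ holds, and that $\mathcal G$ is --- up to translation and twisting by a line bundle --- the (twisted) ideal sheaf of $V(Z)$. Granting this, one passes to cohomology. The cohomological Fourier--Mukai transform computes Chern characters of transforms, Todd classes being trivial on an abelian variety \cite[Ch.\ 9]{huy}; under the isomorphism $A\cong\hat A$ induced by the polarization it carries $e^{\Theta}$ to $\pm e^{-\Theta}$, intertwines cup product with Pontryagin product, is compatible with the Hard Lefschetz decomposition, and in particular exchanges the minimal class $\tfrac1{c!}[\Theta]^{c}$ (with $c=g-d$) with the minimal class $\tfrac1{d!}[\Theta]^{d}$ of complementary dimension. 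Feeding $\operatorname{ch}(\mathcal I_Z(\Theta))=e^{\Theta}\big(1-[Z]-\operatorname{ch}_{>c}(\mathcal O_Z)\big)$ through this transform then produces an explicit formula for $\operatorname{ch}(\mathcal G)$ in terms of $[\Theta]$, $[V(Z)]$ and the Hard Lefschetz components of $[Z]$.

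The positivity step closes the proof. Since $\mathcal G$ is a genuine coherent sheaf, $\rk\mathcal G=\chi(\mathcal I_Z(\Theta))\geq 0$ and, after discarding the locally free part, the lowest nonzero graded term of $\operatorname{ch}(\mathcal G)$ is the class of an effective cycle --- a positive multiple of $[V(Z)]$. Matching this with the formula above and stripping off the Hard Lefschetz components of $[Z]$ one weight at a time, each non-leading component is forced to contribute with the wrong sign unless it vanishes; hence $[Z]=m\cdot\tfrac1{c!}[\Theta]^{c}$ for an integer $m$, which is positive because $Z$ is effective and geometrically non-degenerate (cf.\ Lemma \ref{lem:ran:non-deg} and \cite{debarre}). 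Running the same analysis for the reciprocal subvariety $V(Z)$ gives a numerical relation linking $m$ to the corresponding integer $m'$ of $V(Z)$, and this relation together with $m,m'\geq 1$ forces $m=m'=1$. Therefore $[Z]=\tfrac1{(g-d)!}[\Theta]^{g-d}$, as claimed.

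The step I expect to be the main obstacle is the middle one: the theta-dual reciprocity $V(V(Z))=Z$, the dimension count $\dim V(Z)=g-1-d$, and the identification of $\mathcal G$ with a twisted ideal sheaf of $V(Z)$. This is exactly where the generic-vanishing hypothesis enters in an essential way --- for a general geometrically non-degenerate $Z$ the complex $\R\hat{\mathcal S}(\R\mathcal{H}om(\mathcal I_Z(\Theta),\mathcal O_A))$ has several nonzero cohomology sheaves and the positivity argument breaks down. A further technical nuisance is that $Z$ need not be Cohen--Macaulay, so $\R\mathcal{H}om(\mathcal O_Z(\Theta),\mathcal O_A)$ may be supported in several degrees; controlling the higher terms $\operatorname{ch}_{>c}(\mathcal O_Z)$ in the Chern-character computation then requires a spectral-sequence argument (or a reduction to the Hard Lefschetz components that actually matter).
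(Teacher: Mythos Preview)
The paper does not contain a proof of this statement. Theorem~\ref{thm:pareschi-popa} is quoted verbatim from \cite[Thm.~B]{pareschi-popa} and used as a black box in the proof of Theorem~\ref{thm:GV}; no argument is supplied or sketched in the present paper. So there is nothing here to compare your proposal against.

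That said, a brief comment on your outline as a reconstruction of the Pareschi--Popa argument. The overall architecture --- pass to the derived dual, invoke WIT$_g$ via \cite[Thm.~2.1]{pareschi-popa}, identify the resulting sheaf with (a twist of) the ideal sheaf of the theta dual $\T(Z)$, and then run a Chern-character/positivity computation under the cohomological Fourier--Mukai transform --- is the right shape, and you have correctly located the delicate point: establishing that $\T(Z)$ has the expected dimension, that the reciprocity $\T(\T(Z))=Z$ holds, and that the transform really is a twisted ideal sheaf rather than something more complicated. Your caveat about $Z$ possibly failing to be Cohen--Macaulay is also well placed. Where your sketch is vaguest is in the ``positivity step'': the phrase ``stripping off the Hard Lefschetz components one weight at a time, each non-leading component is forced to contribute with the wrong sign unless it vanishes'' is more of a hope than an argument, and in the actual proof the mechanism that forces the minimal class is tighter and more structural than a sign-by-sign elimination. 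If you intend to fill this in, that is the part that needs genuine work; the rest is bookkeeping once the sheaf-theoretic identifications are in hand.
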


Combining Theorem \ref{thm:pareschi-popa} with Debarre's ``minimal class conjecture'' in \cite{debarre}, Pareschi and Popa arrive at the following, see \cite[p.\ 210]{pareschi-popa}.

\begin{conjecture}\label{conj:GV}
Let $(A,\Theta)$ be an indecomposable ppav of dimension $g$ and let $Z$ be a geometrically non-degenerate $d$-dimensional subvariety with $1\leq d\leq g-2$.
If 
\begin{align} \label{eq:I_Y=GV}
\mathcal I_Z(\Theta) \ \text{is a GV-sheaf} ,
\end{align}
then either $(A,\Theta)$ is isomorphic to the Jacobian of a smooth curve $C$ and $Z$ is a translate of $W_d(C)$, or it is isomorphic to the intermediate Jacobian of a smooth cubic threefold and $Z$ is a translate of the Fano surface of lines.
\end{conjecture}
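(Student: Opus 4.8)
The plan is to separate the argument into a cohomological reduction, which is unconditional, and a geometric classification, which is the genuine difficulty. For the first step, the GV hypothesis (\ref{eq:I_Y=GV}) is fed directly into Theorem \ref{thm:pareschi-popa}: since $Z$ is geometrically non-degenerate of dimension $d$ and $\mathcal I_Z(\Theta)$ is a GV-sheaf, we conclude at once that $Z$ has minimal cohomology class
\[
[Z]=\frac{1}{(g-d)!}[\Theta]^{g-d} .
\]
After this the GV-sheaf condition may be discarded entirely. Observe also that the minimal class is a multiple of $\frac{1}{(g-d)!}[\Theta]^{g-d}$, so by Lemma \ref{lem:ran:non-deg} it already forces $Z$ to be non-degenerate; the non-degeneracy hypothesis of the conjecture is thus automatic, and nothing is lost in passing to the minimal-class formulation.

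The problem is therefore reduced to Debarre's minimal class conjecture: in an indecomposable ppav, a $d$-dimensional subvariety with $1\le d\le g-2$ of class $\frac{1}{(g-d)!}[\Theta]^{g-d}$ must be a translate of $W_d(C)$ in a Jacobian or a translate of the Fano surface in the intermediate Jacobian of a cubic threefold. Two ranges of $d$ should be settled by citing existing results. For $d=1$ a curve of minimal class forces $(A,\Theta)$ to be a Jacobian and identifies $Z$ with a translate of $W_1(C)$ by Matsusaka--Hoyt's criterion \cite{hoyt}; here the cubic-threefold alternative cannot occur, as its distinguished subvariety is two-dimensional. For $d=g-2$ the subvariety has codimension two, and the minimal-class classification in that codimension, established by the methods of \cite{debarre,pareschi-popa}, gives the Jacobian alternative; again the cubic threefold, whose Fano surface has codimension three, plays no role. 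Combined with the first step, these recover Conjecture \ref{conj:GV} for $d=1$ and $d=g-2$.

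The heart of the matter is the intermediate range $2\le d\le g-3$, where both alternatives genuinely appear (for $g=5$, $d=2$ the Fano surface is an instance). Here I would attempt an inductive descent in $d$, using that the intersection of a minimal-class $Z$ with a general translate $\Theta_a$ breaks into minimal-class pieces of lower dimension, in the spirit of the decomposition of $\Wdiv_\Theta(\Theta\cap\Theta_c)$ analysed in Proposition \ref{prop:[C],[Y]}, and feeding the Gau\ss\ map and theta-dual techniques of \cite{ran} and \cite{pareschi-popa}. If along the way $Z$ is shown to possess a curve summand $Z=C+Y$, then Theorem \ref{thm:GV} applies and simultaneously forces the Jacobian alternative and identifies all the summands. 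The main obstacle is precisely that this decomposition route cannot reach the cubic-threefold alternative: the Fano surface is indecomposable and admits no curve summand, so a complete proof must classify the non-decomposable minimal-class subvarieties by some genuinely new input. This remaining step is exactly the open part of Debarre's minimal class conjecture, and it is the true bottleneck of the whole argument.
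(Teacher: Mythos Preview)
The statement you are attempting is labelled a \emph{conjecture} in the paper; there is no proof of it there, and none is claimed. The paper only records that Pareschi--Popa proved it for $d=1$ and $d=g-2$ \cite[Thm.\ C]{pareschi-popa}, and that Theorem~\ref{thm:GV} settles it under the additional hypothesis of a curve summand. Your proposal is therefore not to be compared against a proof but against the paper's discussion of the conjecture, and with that discussion it agrees: the reduction via Theorem~\ref{thm:pareschi-popa} to Debarre's minimal class conjecture is exactly how the paper frames the problem, your observation that the curve-summand case feeds into Theorem~\ref{thm:GV} is the paper's own contribution, and your explicit acknowledgment that the intermediate range $2\leq d\leq g-3$ is the genuine open bottleneck is accurate.

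One correction is worth making. After extracting the minimal class you write that ``the GV-sheaf condition may be discarded entirely'', and then for $d=g-2$ you appeal to ``the minimal-class classification in that codimension, established by the methods of \cite{debarre,pareschi-popa}''. But Debarre's minimal class conjecture is \emph{not} known in codimension two for an arbitrary indecomposable ppav. Pareschi--Popa's argument for $d=g-2$ in \cite[Thm.\ C]{pareschi-popa} does not proceed purely through the minimal class of $Z$: it uses the GV hypothesis a second time, via Proposition~\ref{prop:pp}, to force $\dim\T(Z)=1$, and then shows this theta-dual curve has minimal class so that Matsusaka--Hoyt applies to it. So the GV condition cannot be discarded after the first step if you wish to cite their result for $d=g-2$; the clean two-step reduction you describe (GV $\Rightarrow$ minimal class $\Rightarrow$ classification) is only fully justified at $d=1$.
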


Pareschi and Popa \cite[Thm.\ C]{pareschi-popa} proved Conjecture \ref{conj:GV} for $d=1$ and $d=g-2$.
Theorem \ref{thm:GV} stated in the introduction proves it for subvarieties with curve summands and arbitrary dimension.
Before we can explain the proof of Theorem \ref{thm:GV}, we need to recall Pareschi--Popa's notion of theta duals \cite[p.\ 216]{pareschi-popa}.

\begin{definition}
Let $Z\subseteq A$ be a subvariety.
Its theta dual $\mathcal V(Z)\subseteq \hat A$ is the scheme-theoretic support of the $g$-th cohomology sheaf of the complex
\[
(-1_{\hat A})^\ast \R\hat{\mathcal S}(\R\mathcal Hom(\mathcal I_Z(\Theta),\mathcal O_A)) 
\]
in the derived category $\D^b(\hat A)$.
\end{definition}

From now on, we use $\Theta$ to identify $\hat A$ with $A$. 
The theta dual of $Z\subseteq A$ is then a subscheme $\T(Z)\subseteq A$.
For $W_d(C)$ inside a Jacobian of dimension $g\geq 2$, Pareschi and Popa proved \cite[Sect.\ 8.1]{pareschi-popa}
\begin{align} \label{eq:T(W_g-2)}
\T(W_d(C))=-W_{g-d-1}(C), 
\end{align} 
for $1\leq d\leq g-2$.
Apart from this example, it is in general difficult to compute $\T(Z)$.
However, the reduced scheme $\T(Z)^{\reduced}$ can be easily described as follows.

\begin{lemma} \label{lem:thetadual}
Let $Z\subseteq A$ be a subvariety.
The components of the reduced scheme $\T(Z)^{\reduced}$ are given by the maximal (with respect to inclusion) subvarieties $W\subseteq A$ such that $Z-W\subseteq \Theta$.
\end{lemma}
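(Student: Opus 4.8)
The plan is to unwind the definition of the theta dual and relate its closed points to a concrete cohomological condition, then translate that condition into the geometric statement $Z - W \subseteq \Theta$. First I would recall from \cite[p.\ 216]{pareschi-popa} that a point $\xi \in A$ lies in the support of the $g$-th cohomology sheaf of $(-1_{\hat A})^\ast \R\hat{\mathcal S}(\R\mathcal Hom(\mathcal I_Z(\Theta),\mathcal O_A))$ if and only if $H^g$ of the relevant complex does not vanish in a neighbourhood of $\xi$; by base change this can be detected fibrewise, and using Grothendieck--Serre duality together with the standard identification of the Fourier--Mukai transform with cohomology along the Poincar\'e bundle, one sees that $\xi \in \T(Z)$ (set-theoretically) is equivalent to $H^0\!\big(A, \mathcal I_Z(\Theta) \otimes P_\xi\big) \neq 0$, where $P_\xi$ is the degree-zero line bundle corresponding to $\xi$. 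Under the identification of $\hat A$ with $A$ via $\Theta$, the bundle $\mathcal O_A(\Theta) \otimes P_\xi$ is $\mathcal O_A(\Theta_{-\xi})$, so the condition reads: the unique (up to scalar) section of $\mathcal O_A(\Theta_{-\xi})$, whose divisor is $\Theta_{-\xi}$, vanishes along $Z$, i.e. $Z \subseteq \Theta_{-\xi}$, equivalently $Z_\xi \subseteq \Theta$, equivalently $Z - \{-\xi\} \subseteq \Theta$.

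Next I would pass from points to subvarieties. The set-theoretic description above says that the closed points of $\T(Z)^{\reduced}$ are exactly those $\xi$ with $Z_\xi \subseteq \Theta$; equivalently, writing $w = -\xi$, the point $w$ satisfies $Z - \{w\} \subseteq \Theta$. For a subvariety $W \subseteq A$, the condition $Z - W \subseteq \Theta$ holds if and only if $Z - \{w\} \subseteq \Theta$ for every $w \in W$ (this uses only that $Z - W$ is the image of $Z \times W$ under the difference morphism, together with irreducibility and closedness of $\Theta$). Hence an irreducible closed subset $W$ is contained in $\T(Z)^{\reduced}$ if and only if $Z - W \subseteq \Theta$. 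The components of $\T(Z)^{\reduced}$ are by definition its maximal irreducible closed subsets, so they are precisely the maximal subvarieties $W$ with $Z - W \subseteq \Theta$, which is the assertion. A small point to check is that the collection of subvarieties $W$ with $Z - W \subseteq \Theta$ does have maximal elements and that each such maximal $W$ is actually a component (not properly contained in a larger component): this follows because $\T(Z)^{\reduced}$, being a scheme of finite type, has finitely many components, and every $W$ with $Z-W\subseteq\Theta$ lies in one of them, while each component itself satisfies the condition.

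The main obstacle I anticipate is the first step: making rigorous the claim that membership in the support of the top cohomology sheaf of the Fourier--Mukai transform of $\R\mathcal Hom(\mathcal I_Z(\Theta),\mathcal O_A)$ is detected by the fibrewise non-vanishing of $H^0(A,\mathcal I_Z(\Theta)\otimes P_\xi)$. This requires a base-change / cohomology-and-base-change argument for the complex $\R\mathcal Hom$, an application of Grothendieck duality to rewrite $R\Gamma(A, \R\mathcal Hom(\mathcal I_Z(\Theta),\mathcal O_A)\otimes P_\xi^{-1})$ in terms of $H^\bullet(A,\mathcal I_Z(\Theta)\otimes P_\xi)$ up to dualizing, and care with the shift by $g$ and the sign $(-1_{\hat A})^\ast$. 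Once this dictionary is in place, the rest is the elementary geometric translation above. In fact this identification of $\T(Z)^{\reduced}$ is already implicit in \cite[Sect.\ 8]{pareschi-popa}, so I would either cite it directly or supply the short base-change argument sketched here.
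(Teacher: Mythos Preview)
Your approach is correct and essentially the same as the paper's: the paper simply cites \cite[p.\ 216]{pareschi-popa} for the set-theoretic description $\{a\in A\mid Z\subseteq \Theta_a\}$ of the closed points of $\T(Z)$ and declares the lemma proved, which is exactly the ``cite it directly'' option you mention at the end. One caution on your longer derivation: your sign bookkeeping slips---you obtain $\xi\in\T(Z)\Leftrightarrow Z\subseteq\Theta_{-\xi}$ but then conclude $W\subseteq\T(Z)^{\reduced}\Leftrightarrow Z-W\subseteq\Theta$ without accounting for the substitution $w=-\xi$; the $(-1_{\hat A})^\ast$ and the polarization convention need to be tracked so that you land on $Z\subseteq\Theta_a$ rather than $\Theta_{-a}$.
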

\begin{proof}
By \cite[p.\ 216]{pareschi-popa}, the set of closed points of $\T(Z)$ is $\left\{a\in A\mid Z\subseteq \Theta_a\right\}$.
This proves the lemma.
\end{proof}

We will use the following consequence of (\ref{eq:T(W_g-2)}) and Lemma \ref{lem:thetadual}.

\begin{lemma} \label{lem:W=W_d}
Let $C$ be a smooth curve of genus $g\geq 2$ and let $Z$ be a $(g-d-1)$-dimensional subvariety of $J(C)$ such that $W_d(C)+Z$ is a translate of the theta divisor $\Theta_C$.
Then, $Z$ is a translate of $W_{g-d-1}(C)$. 
\end{lemma}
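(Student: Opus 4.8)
The plan is to use the theta dual computation \eqref{eq:T(W_g-2)} together with the description of the reduced theta dual in Lemma \ref{lem:thetadual}. After translating $Z$, we may assume that $W_d(C)+Z=\Theta_C$ exactly (not merely up to translation), so in particular $W_d(C)+Z\subseteq \Theta_C$, which by Lemma \ref{lem:thetadual} means that $-Z$ is contained in one of the maximal subvarieties $W$ with $W_d(C)-W\subseteq\Theta_C$; that is, $-Z$ is contained in a component of $\T(W_d(C))^{\reduced}$. By \eqref{eq:T(W_g-2)} we have $\T(W_d(C))=-W_{g-d-1}(C)$, which is irreducible, so its reduced scheme is the single irreducible variety $-W_{g-d-1}(C)$. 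Hence $-Z\subseteq -W_{g-d-1}(C)$, i.e.\ $Z\subseteq W_{g-d-1}(C)$.

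Now I would conclude by a dimension count. Both $Z$ and $W_{g-d-1}(C)$ have dimension $g-d-1$, and $W_{g-d-1}(C)$ is irreducible; since $Z$ is a subvariety of $W_{g-d-1}(C)$ of the same dimension, it must equal $W_{g-d-1}(C)$. Undoing the initial translation, the original $Z$ is a translate of $W_{g-d-1}(C)$, as claimed. (One should note that the case $d=g-1$ is excluded by the hypothesis $g\geq 2$ forcing $1\le d\le g-2$ implicitly through the requirement that $Z$ be positive-dimensional or a point of the right dimension; in any case \eqref{eq:T(W_g-2)} is stated for $1\leq d\leq g-2$, and $g-d-1$ then ranges over $1,\dots,g-2$ as well, so the setup is symmetric and consistent.)

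The main obstacle, and essentially the only nontrivial input, is the translation normalization at the very start: \eqref{eq:T(W_g-2)} and Lemma \ref{lem:thetadual} are about inclusions $W_d(C)-W\subseteq\Theta_C$, whereas the hypothesis only gives $W_d(C)+Z$ equal to \emph{some} translate $(\Theta_C)_a$ of $\Theta_C$. I would absorb the translation $a$ into $Z$ (replacing $Z$ by $Z_{-a}$, say, using that $(\Theta_C)_a=W_{g-1}(C)_a$ and that $W_d(C)+Z_{-a}=(W_d(C)+Z)_{-a}=\Theta_C$), so that the reduction to the theta-dual statement is clean. Once that bookkeeping is done, the argument is immediate from the irreducibility of $W_{g-d-1}(C)$ and the equality of dimensions; there is no further geometry to do.
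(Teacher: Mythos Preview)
Your argument is correct and matches the paper's proof essentially verbatim: translate so that $W_d(C)+Z=\Theta_C$, use Lemma~\ref{lem:thetadual} to get $-Z\subseteq \T(W_d(C))$, apply \eqref{eq:T(W_g-2)} to identify $\T(W_d(C))=-W_{g-d-1}(C)$, and conclude by irreducibility and equality of dimensions. Your parenthetical about the range of $d$ is slightly muddled (the lemma allows $Z$ to be a point), but this is harmless since the boundary cases $d=0$ and $d=g-1$ are trivial anyway.
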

\begin{proof}
By assumption, there is a point $a\in J(C)$ with 
$
W_d(C)+Z_a=\Theta_C 
$.
Hence, 
\[
(-Z)_{-a}\subseteq \T(W_d(C))
\] 
by Lemma \ref{lem:thetadual}.
By (\ref{eq:T(W_g-2)}), $(-Z)_{-a}\subseteq -W_{g-d-1}(C)$ and equality follows because of dimension reasons.
\end{proof}

For a geometrically non-degenerate subvariety $Z\subseteq A$ of dimension $d$,
\begin{align} \label{ineq:dim(T(Y))}
\dim(\T(Z))\leq g-d-1 
\end{align}
follows from Lemmas \ref{lem:debarre} and \ref{lem:thetadual}. 
Moreover, if equality is attained in (\ref{ineq:dim(T(Y))}), then
$
\Theta=Z-W
$ 
for some component $W$ of $\T(Z)^{\reduced}$, and so $\Theta$ has $Z$ as a $d$-dimensional summand. 

Pareschi and Popa proved the following \cite[Thm.\ 5.2(a)]{pareschi-popa}.

\begin{proposition} \label{prop:pp} 
Let $Z\subseteq A$ be a geometrically non-degenerate subvariety.
If $\mathcal I_Z(\Theta)$ is a GV-sheaf, equality holds in (\ref{ineq:dim(T(Y))}).
\end{proposition}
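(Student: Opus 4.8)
I would deduce this from the Fourier--Mukai picture, by proving the \emph{a priori} stronger inequality $\dim\T(Z)\geq g-d-1$, where $d:=\dim Z$; together with the upper bound~(\ref{ineq:dim(T(Y))}) this gives the asserted equality. Since $\mathcal I_Z(\Theta)$ is a GV-sheaf, \cite[Thm.\ 2.1]{pareschi-popa} tells us that the complex $\R\hat{\mathcal S}(\R\mathcal Hom(\mathcal I_Z(\Theta),\mathcal O_A))$ on $\hat A$ is concentrated in cohomological degree $g$; write $\mathcal G$ for its unique nonzero cohomology sheaf, so that the complex is $\mathcal G[-g]$. By the definition of the theta dual, $\T(Z)$ is the scheme-theoretic support of $(-1_{\hat A})^{\ast}\mathcal G$, so $\dim\T(Z)=\dim\supp(\mathcal G)$, and it remains to bound $\dim\supp(\mathcal G)$ below by $g-d-1$.

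The key point is that the complex $\R\mathcal Hom(\mathcal I_Z(\Theta),\mathcal O_A)$ on $A$ detects the codimension of $Z$. Applying $\R\mathcal Hom(-,\mathcal O_A)$ to $0\to\mathcal I_Z(\Theta)\to\mathcal O_A(\Theta)\to\mathcal O_Z(\Theta)\to0$, with $\mathcal O_Z(\Theta):=\mathcal O_Z\otimes\mathcal O_A(\Theta)$, and using $\R\mathcal Hom(\mathcal O_A(\Theta),\mathcal O_A)=\mathcal O_A(-\Theta)$, one obtains $\mathcal Ext^{i}(\mathcal I_Z(\Theta),\mathcal O_A)\cong\mathcal Ext^{i+1}(\mathcal O_Z(\Theta),\mathcal O_A)$ for $i\geq1$. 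Since $Z$ is a subvariety of dimension $d$, hence of codimension $g-d$, the sheaf $\mathcal Ext^{j}(\mathcal O_Z,\mathcal O_A)$ vanishes for $j<g-d$ and is nonzero, with support equal to $Z$, for $j=g-d$ (on the smooth locus of $Z$ it is the line bundle $\omega_Z$, using $\omega_A=\mathcal O_A$). Hence the degree-$(g-d-1)$ cohomology sheaf of $\R\mathcal Hom(\mathcal I_Z(\Theta),\mathcal O_A)$ is nonzero: it equals $\mathcal Ext^{g-d}(\mathcal O_Z,\mathcal O_A)\otimes\mathcal O_A(-\Theta)$ when $d\leq g-2$, and the rank-one sheaf $\mathcal Hom(\mathcal I_Z(\Theta),\mathcal O_A)$ sitting in degree $0$ when $d=g-1$.

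Now I would transport this back to $\hat A$. Let $\R\mathcal S:\D^b(\hat A)\to\D^b(A)$ be the Fourier--Mukai transform in the opposite direction. Mukai's inversion theorem $\R\mathcal S\circ\R\hat{\mathcal S}\cong(-1_A)^{\ast}\circ[-g]$, applied to the identity $\R\hat{\mathcal S}(\R\mathcal Hom(\mathcal I_Z(\Theta),\mathcal O_A))=\mathcal G[-g]$, yields
\[
\R\mathcal S(\mathcal G)\cong(-1_A)^{\ast}\,\R\mathcal Hom(\mathcal I_Z(\Theta),\mathcal O_A).
\]
On the other hand, writing $s:=\dim\supp(\mathcal G)$, Grothendieck's vanishing theorem applied to the projection $\hat A\times A\to A$, whose restriction to $\supp(\mathcal G)\times A$ has fibres of dimension $\leq s$, gives $R^{i}\mathcal S(\mathcal G)=0$ for $i>s$. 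Comparing the two sides in cohomological degree $g-d-1$, where the right-hand side is nonzero by the previous step, forces $s\geq g-d-1$. This proves the claim, and with~(\ref{ineq:dim(T(Y))}) the proposition follows.

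I do not expect a serious obstacle: once the framework is in place the argument is short. The two spots that need care are the bookkeeping of shifts and of the sign $(-1_A)^{\ast}$ (in Mukai's inversion and in the definition of $\T(Z)$), and the vanishing $R^{i}\mathcal S(\mathcal G)=0$ for $i>\dim\supp(\mathcal G)$. It is precisely this last vanishing that uses the hypothesis: by \cite[Thm.\ 2.1]{pareschi-popa}, being a GV-sheaf is exactly what collapses $\R\hat{\mathcal S}(\R\mathcal Hom(\mathcal I_Z(\Theta),\mathcal O_A))$ to a single sheaf $\mathcal G$, so that the dimension of its support is controlled by the cohomological length of $\R\mathcal Hom(\mathcal I_Z(\Theta),\mathcal O_A)$, that is, by the codimension of $Z$.
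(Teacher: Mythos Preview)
Your argument is correct. Note, however, that the paper does not prove this proposition at all: it is quoted verbatim as \cite[Thm.\ 5.2(a)]{pareschi-popa}, and the Fourier--Mukai argument you give is precisely the one Pareschi and Popa use there.
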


Motivated by Proposition \ref{prop:pp}, Pareschi and Popa conjectured \cite[p.\ 222]{pareschi-popa} that Conjecture \ref{conj:GV} holds if one replaces (\ref{eq:I_Y=GV}) by the weaker assumption 
\begin{align} \label{eq:dim(T(Y))}
\dim(\T(Z))=g-d-1 .
\end{align}
By the above discussion, their conjecture is equivalent to

\begin{conjecture}\label{conj:pp} 
Let $(A,\Theta)$ be an indecomposable ppav of dimension $g$ and let $Z$ be a geometrically non-degenerate subvariety of dimension $1\leq d\leq g-2$.
Suppose that
\begin{align} \label{eq:conjpp}
\Theta=Z+W 
\end{align}
for some subvariety $W\subseteq A$.
Then, either $(A,\Theta)$ is isomorphic to the Jacobian of a smooth curve $C$ and $Z$ is a translate of $W_d(C)$, or it is isomorphic to the intermediate Jacobian of a smooth cubic threefold and $Z$ is a translate of the Fano surface of lines.
\end{conjecture}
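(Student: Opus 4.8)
Since Conjecture \ref{conj:pp} cannot be expected to follow in full from the results assembled so far, the plan is to establish it in every case where one of the two summands of $\Theta$ is a curve --- that is $d=1$ or $d=g-2$ --- and, more generally, whenever $Z$ itself carries a curve summand (the situation of Theorem \ref{thm:GV}), each time by reduction to Theorem \ref{thm:Theta}; I then isolate the intermediate range $2\le d\le g-3$ as the genuine obstacle.

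First, the extremal dimensions. If $d=1$ then $Z$ is a geometrically non-degenerate curve, Lemma \ref{lem:debarre}(1) forces $\dim W=g-2$, and $\Theta=Z+W$ is precisely the hypothesis of Theorem \ref{thm:Theta}; its conclusion gives $(A,\Theta)\cong(J(Z),\Theta_Z)$ with $Z$ a translate of $W_1(Z)$. If $d=g-2$ then $\dim W=1$, so $W$ is a curve and Theorem \ref{thm:Theta}, applied with the two summands interchanged, gives $(A,\Theta)\cong(J(W),\Theta_W)$ with $Z$ a translate of $W_{g-2}(W)$. In both cases the output of Theorem \ref{thm:Theta} places us in the first alternative of the conjecture, the cubic-threefold alternative being excluded because that intermediate Jacobian is not a Jacobian of a curve.

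For arbitrary $d$ with $Z=C+Y$ having a curve summand, $\Theta=Z+W=C+(Y+W)$ exhibits $\Theta$ with a curve summand, and since $\Theta$ induces a principal polarization one sees $\dim(Y+W)=g-2$ (were it $g-1$, the irreducible divisor $\Theta$ would be invariant under $C-C$, contradicting that its translation stabilizer is trivial); so Theorem \ref{thm:Theta} applies and yields an isomorphism $(A,\Theta)\cong(J(C),\Theta_C)$ sending $C$ to a translate of $W_1(C)$ and $Y+W$ to a translate of $W_{g-2}(C)$. In particular $(A,\Theta)$ is a Jacobian and, by Proposition \ref{prop:[C],[Y]}, $[C]=\tfrac{1}{(g-1)!}[\Theta]^{g-1}$. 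It remains to recognize $Z$, $Y$ and $W$ inside $J(C)$. Assuming in addition that $\mathcal I_Z(\Theta)$ is a GV-sheaf --- the hypothesis of Theorem \ref{thm:GV}, which by Proposition \ref{prop:pp} implies (\ref{eq:conjpp}) --- Pareschi--Popa's Theorem \ref{thm:pareschi-popa} shows $Z$ has minimal class $\tfrac{1}{(g-d)!}[\Theta]^{g-d}$, whereupon Debarre's classification \cite{debarre} of minimal-class subvarieties of a Jacobian forces $Z$ to be a translate of $\pm W_d(C)$; the sign is pinned down by the curve summand, and $Y$ and $W$ are then identified as translates of $W_{d-1}(C)$ and $W_{g-1-d}(C)$ using the theta-dual identity $\T(W_d(C))=-W_{g-d-1}(C)$ of (\ref{eq:T(W_g-2)}) together with Lemma \ref{lem:W=W_d}. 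Without the GV-hypothesis one still reaches the Jacobian by Theorem \ref{thm:Theta}, but the last identification seems to need the minimal-class input, which is where I expect the difficulty to lie in this subcase.

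The genuine obstacle is the complementary range $2\le d\le g-3$, in which both summands of $\Theta$ have dimension $\ge 2$ and $Z$ need not carry a curve summand --- the range that contains the Fano surface of lines on a cubic threefold, at $d=g-3=2$, $g=5$. Theorem \ref{thm:Theta} ultimately relies on Matsusaka--Hoyt's criterion, namely that an indecomposable ppav whose class $\tfrac{1}{(g-1)!}[\Theta]^{g-1}$ is represented by an effective curve is a Jacobian, and no analogous criterion is known in terms of a minimal class $\tfrac{1}{(g-d)!}[\Theta]^{g-d}$ with $2\le g-d\le g-2$. Proving the conjecture in that range amounts essentially to the relevant instance of Debarre's minimal-class conjecture and, in my view, would demand genuinely new ideas rather than the cohomological and Gauss-map techniques of Section \ref{sec:thm1}. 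In summary, the proposal settles Conjecture \ref{conj:pp} unconditionally for $d\in\{1,g-2\}$, and for all $d$ under the GV-hypothesis of Theorem \ref{thm:GV}, leaving the middle-dimensional case without a curve summand open.
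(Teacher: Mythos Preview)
Your proposal is correct and matches the paper's own stance: Conjecture~\ref{conj:pp} is stated as an open conjecture, and the paper only claims (in the sentence immediately following it) that Theorem~\ref{thm:Theta} settles the extremal cases $d=1$ and $d=g-2$, which is exactly what your first substantive paragraph does. Your identification of the middle range $2\le d\le g-3$ as genuinely open, and your remark that it would essentially require the relevant instance of Debarre's minimal-class conjecture, also agree with the paper's position.

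One clarification: your third paragraph, treating the case where $Z$ has a curve summand under the additional GV-hypothesis, is not further progress on Conjecture~\ref{conj:pp} as stated (which has no GV-assumption) but rather a sketch of the proof of Theorem~\ref{thm:GV}. That sketch is broadly right, but the phrase ``the sign is pinned down by the curve summand'' glosses over a real step: in the paper's proof of Theorem~\ref{thm:GV}, the case $\psi(Z)=-W_d(C)$ is not immediately excluded by the presence of a curve summand but requires an argument via Martens' theorem showing that $C$ would then be hyperelliptic (whence $-W_d(C)$ is a translate of $W_d(C)$ anyway). Also, your honest remark that without the GV-hypothesis one reaches the Jacobian via Theorem~\ref{thm:Theta} but cannot yet identify $Z$ as $W_d(C)$ is accurate; the paper does not claim more than this either.
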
 

Theorem \ref{thm:Theta} proves (a strengthening of) Conjecture \ref{conj:pp} for $d=1$ and $d=g-2$.
This provides the first known evidence for that conjecture.

\begin{remark}
Conjecture \ref{conj:GV} is implied by Conjecture \ref{conj:pp}, as well as by Debarre's ``minimal class conjecture'' in \cite{debarre}.
Similar implications among the latter two conjectures are not known.
\end{remark}


We end this section with the proof of Theorem \ref{thm:GV}.

\begin{proof}[Proof of Theorem \ref{thm:GV}]
Let $Z\subsetneq A$ be as in Theorem \ref{thm:GV}.
Since $\mathcal I_Z(\Theta)$ is a GV-sheaf, equality holds in (\ref{ineq:dim(T(Y))}) by Proposition \ref{prop:pp}. 
The reduced theta dual $\T(Z)^{\reduced}$ contains thus by Lemmas \ref{lem:debarre} and \ref{lem:thetadual} a $(g-d-1)$-dimensional component $W$ with $Z-W=\Theta$.
By assumption (\ref{item:Y=Y'+C}) in Theorem \ref{thm:GV}, we obtain
\begin{align*} 
\Theta= C+Y-W .
\end{align*} 
By Theorem \ref{thm:Theta}, $C$ is smooth and there is an isomorphism $\psi:(A,\Theta)\congpf (J(C),\Theta_C)$ which identifies $C$ and $Y-W$ with translates of $W_1(C)$ and $W_{g-2}(C)$, respectively. 
Hence,
\begin{align} \label{eq:psiZ-psiW}
\psi(Z)-\psi(W)=\psi(C)+\psi(Y)-\psi(W)= W_{g-1}(C)_a ,
\end{align}
for some $a\in J(C)$ and it remains to prove that $\psi(Y)$ is a translate of $W_{d-1}(C)$.

If $d=g-1$, then $\psi(W)$ is a point and $\psi(Y)$ is a translate of $W_{g-2}(C)$, as we want.
We may therefore assume $d\leq g-2$ in the following.
By Theorem \ref{thm:pareschi-popa}, the GV-condition on $\mathcal I_Z(\Theta)$ implies
\begin{align*} 
[Z]=\frac{1}{(g-d)!}\cdot [\Theta]^{g-d} .
\end{align*}
By Debarre's Theorem \cite{debarre}, $\psi(Z)$ is thus a translate of $W_d(C)$ or $-W_d(C)$.

\textbf{Case 1:} $\psi(Z)$ is a translate of $W_d(C)$.

By (\ref{eq:psiZ-psiW}), $W_d(C)-\psi(W)$ is here a translate of $W_{g-1}(C)$ and so $-\psi(W)$ is a translate of $W_{g-d-1}(C)$ by Lemma \ref{lem:W=W_d}.
Hence, $W_{g-d}(C)+\psi(Y)$ is a translate of $W_{g-1}(C)$.
Applying Lemma \ref{lem:W=W_d} again shows then that $\psi(Y)$ is a translate of $W_{d-1}(C)$, as we want.

\textbf{Case 2:} $\psi(Z)$ is a translate of $-W_d(C)$. 

By (\ref{eq:psiZ-psiW}), $W_d(C)+\psi(W)$ is in this case a translate of $-W_{g-1}(C)$ and thus of $W_{g-1}(C)$.
By Lemma \ref{lem:W=W_d}, $\psi(W)$ is therefore a translate of $W_{g-d-1}(C)$.
Since $1\leq d\leq g-2$, it follows from (\ref{eq:psiZ-psiW}) that 
\begin{align} \label{eq2:thm:GV:W'}
W_{g-1}(C)=W_1(C)-W_1(C)+W' ,
\end{align}
where $W'$ is a translate of $\psi(Y)-W_{g-d-2}(C)$. 
By Lemma \ref{lem:W=W_d},
\begin{align} \label{eq:thm:GV:W'}
-W_1(C)+W'=W_{g-2}(C) .
\end{align}
Let $c_0\in C$ be the preimage of $0\in J(C)$ under the Abel--Jacobi embedding. 
Any point on $W'$ is then represented by a divisor $D-g\cdot c_0$ on $C$, where $D$ is effective of degree $ g$.
It follows from (\ref{eq:thm:GV:W'}) that $D-c_0 -c$ is effective for all $c\in C$. 
Thus, 
\[
D-c_0 \in W^1_{g-1}(C) \subseteq \Pic^{g-1}(C)
\]
is a divisor whose linear series is positive-dimensional. 
By (\ref{eq:thm:GV:W'}), we have $\dim(W')\geq g-3$ (in fact equality holds by Lemma \ref{lem:debarre}) and so $\dim(W^1_{g-1}(C))\geq g-3$.
A theorem of Martens \cite[p.\ 191]{arbarello-etal} implies that $C$ is hyperelliptic and so case 1 applies.
This concludes the proof. 
\end{proof}

\section{Dominations by products} \label{sec:DPC}

\subsection{The DPC Problem for theta divisors} \label{subsec:DPCTheta}
We have the following well-known

\begin{lemma} \label{lem:F:X1xX2}
Let $A$ be an abelian variety and let $F:Z_1\times Z_2 \dashrightarrow A$ be a rational map from a product of smooth varieties $Z_1$ and $Z_2$.
Then there are morphisms $f_i:Z_i\longrightarrow A$ for $i=1,2$ such that $F=f_1+f_2$.
\end{lemma}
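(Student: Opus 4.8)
The plan is to reduce to the classical rigidity lemma for maps into abelian varieties. First I would extend $F$ to an honest morphism: since $Z_1\times Z_2$ is smooth and $A$ is an abelian variety, any rational map from a smooth variety to $A$ extends to a morphism defined everywhere (a rational map to an abelian variety has no points of indeterminacy, because the indeterminacy locus of a map from a smooth variety to a projective variety has codimension $\geq 2$, while for abelian variety targets it must be empty — this is the standard fact that $A$ contains no rational curves, applied after blowing up). So we may assume $F:Z_1\times Z_2\longrightarrow A$ is a morphism. (If one prefers not to invoke nonsingularity at the boundary, one may instead restrict to a dense open subset where $F$ is defined and argue there, then extend the resulting $f_i$.)

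Next, fix base points $z_1^0\in Z_1$ and $z_2^0\in Z_2$ and define
\begin{align*}
f_1(z_1)&:=F(z_1,z_2^0)-F(z_1^0,z_2^0), \\
f_2(z_2)&:=F(z_1^0,z_2)-F(z_1^0,z_2^0),
\end{align*}
together with the constant $a:=F(z_1^0,z_2^0)$. Set $G:=F-f_1\circ\pr_1-f_2\circ\pr_2 : Z_1\times Z_2\longrightarrow A$; this is again a morphism, and by construction $G$ restricts to the constant $a$ on $Z_1\times\{z_2^0\}$ and on $\{z_1^0\}\times Z_2$. The claim is that $G\equiv a$, which gives $F=f_1\circ\pr_1+f_2\circ\pr_2+a$, and absorbing the constant $a$ into (say) $f_1$ yields the stated decomposition $F=f_1+f_2$ with $f_i:Z_i\longrightarrow A$.

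The main point, and the only place any real content enters, is showing $G$ is constant — i.e. that $G$ depends on neither variable given that it is constant along the two coordinate slices through the base point. This is exactly the rigidity lemma for abelian varieties: for each fixed $z_1\in Z_1$, the morphism $G(z_1,-):Z_2\longrightarrow A$ agrees with the constant map at $z_2^0$; since $Z_2$ is proper and $A$ is an abelian variety, a morphism from a proper variety to $A$ that is "partially constant" in this sense is forced to be constant — concretely, apply the See-Saw / rigidity principle ([birkenhake-lange, Ch.\ 2]) to $G$ viewed as a family of maps $Z_2\to A$ parametrized by $Z_1$: the locus of $z_1$ for which $G(z_1,-)$ is constant is closed, contains $z_1^0$, and once $G(z_1,-)$ is constant its value is $G(z_1,z_2^0)=a$, so this locus is also open (being where $G$ factors through $\pr_1$ locally), hence all of $Z_1$ by connectedness. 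Therefore $G(z_1,z_2)=a$ for all $(z_1,z_2)$, completing the proof. The only subtlety to watch is properness of the $Z_i$ (needed for rigidity); if the $Z_i$ are merely quasi-projective one first compactifies, extends $F$ as above using smoothness, and applies the argument on the compactification.
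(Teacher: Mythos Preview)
Your argument is correct, but it takes a different route from the paper's. The paper's proof is a two-line application of the Albanese formalism: after extending $F$ to a morphism (same first step as yours), it observes that $F$ factors through $\Alb(Z_1)\times\Alb(Z_2)$ by the universal property of the Albanese, and then invokes the fact that any morphism of abelian varieties is the translate of a homomorphism, so the induced map $\Alb(Z_1)\times\Alb(Z_2)\to A$ splits as a sum. Your approach instead works directly on $Z_1\times Z_2$, defining the $f_i$ by restriction to coordinate slices and then killing the cross-term $G=F-f_1-f_2$ via rigidity. The two are closely related under the hood---rigidity is exactly what proves that morphisms of abelian varieties are translates of homomorphisms---so your argument is in effect unpacking the paper's second step while bypassing the Albanese. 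The paper's version is shorter and more structural; yours is more self-contained and avoids appealing to the existence and universal property of $\Alb(Z_i)$. One small remark: your open-and-closed argument for the constancy of $G$ is a slightly roundabout phrasing of the rigidity lemma; it would be cleaner to apply rigidity once (using that $G$ collapses $Z_1\times\{z_2^0\}$ to a point, hence $G$ factors through $\pr_2$, and then evaluate at $z_1^0$). Also note that in this paper varieties are proper by convention, so your caveat about compactifying is unnecessary here.
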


\begin{proof}
Since $A$ does not contain rational curves, $F$ is in fact a morphism, which by the universal property of Albanese varieties factors through
$
\Alb(Z_1)\times \Alb(Z_2)
$. 
We conclude as morphisms between abelian varieties are translates of homomorphisms.
\end{proof}

The following result shows that property (\ref{item:Y=Y'+C}) in Theorem \ref{thm:GV} is in fact a condition on the birational geometry of $Z$.

\begin{corollary} \label{cor:curvesummand}
An $n$-dimensional subvariety $Z$ of an abelian variety $A$ has a $d$-dimensional summand if and only if there is a dominant rational map $F:Z_1\times Z_2\dashrightarrow Z$, where $Z_1$ and $Z_2$ are varieties of dimension $d$ and $n-d$ respectively.
\end{corollary}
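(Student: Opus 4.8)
The plan is to establish both implications using Lemma \ref{lem:F:X1xX2} together with the fact that the image of a product under a morphism to an abelian variety behaves well with respect to sums. For the forward direction, suppose $Z = Z_1 + Z_2$ for subvarieties $Z_1, Z_2 \subseteq A$ of dimensions $d$ and $n-d$. Then the addition morphism $Z_1 \times Z_2 \longrightarrow A$ has image precisely $Z$ by the definition of the sum given in the conventions, and it is dominant onto $Z$ essentially by construction; this is the easy half.

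For the converse, suppose there is a dominant rational map $F: Z_1 \times Z_2 \dashrightarrow Z$ with $\dim Z_1 = d$ and $\dim Z_2 = n-d$. First I would replace $Z_1$ and $Z_2$ by smooth projective models (using resolution of singularities), which does not affect the existence of the dominant rational map to $Z$, and compose with the inclusion $Z \subseteq A$ to get a rational map $Z_1 \times Z_2 \dashrightarrow A$. By Lemma \ref{lem:F:X1xX2}, this map is of the form $f_1 + f_2$ for morphisms $f_i : Z_i \longrightarrow A$. Setting $Z_i' := f_i(Z_i) \subseteq A$, which is a subvariety of $A$ since $Z_i$ is irreducible, the image of $f_1 + f_2$ is exactly $Z_1' + Z_2'$. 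Since $F$ is dominant onto $Z$ and the image of a morphism of projective varieties is closed, we get $Z = Z_1' + Z_2'$.

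The one point requiring care is the dimension bookkeeping: a priori $\dim Z_i' \leq \dim Z_i$, so we only know $\dim Z_1' + \dim Z_2' \leq \dim Z_1' + Z_2' = n$ might fail to split as $d$ and $n-d$. I expect this to be the main (though minor) obstacle. To handle it: since $Z = Z_1' + Z_2'$ and $\dim Z = n$, and since $\dim(Z_1' + Z_2') \le \dim Z_1' + \dim Z_2' \le d + (n-d) = n$, all inequalities are equalities, forcing $\dim Z_1' = d$ and $\dim Z_2' = n-d$. Thus $Z_1'$ and $Z_2'$ are the required summands of the stated dimensions, completing the proof.
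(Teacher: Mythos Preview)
Your proof is correct and follows essentially the same approach as the paper: the forward direction uses the addition map, and the converse resolves singularities and applies Lemma~\ref{lem:F:X1xX2} to split $F$ as a sum. Your extra paragraph verifying that $\dim Z_1' = d$ and $\dim Z_2' = n-d$ is a detail the paper leaves implicit, but it is a welcome clarification and does not change the strategy.
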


\begin{proof}
If $Z$ has a $d$-dimensional summand $Z_1$, the decomposition $Z=Z_1+Z_2$ for a suitable $Z_2$ gives rise to a dominant rational map $F:Z_1\times Z_2\dashrightarrow Z$ as we want.
Conversely, if $F:Z_1\times Z_2\dashrightarrow Z$ is given, after resolving the singularities of $Z_1$ and $Z_2$, the assertion follows from Lemma \ref{lem:F:X1xX2}.
This proves Corollary \ref{cor:curvesummand}.
\end{proof}

Corollary \ref{cor:ThetaDPC} stated in the introduction is an immediate consequence of Riemann's Theorem and 

\begin{corollary} \label{cor:CxY}
Let $(A,\Theta)$ be an indecomposable $g$-dimensional ppav. 
Suppose there is a dominant rational map
\[
F:Z_1\times Z_{2} \dashrightarrow \Theta ,
\]
where $Z_1$ and $Z_2$ are varieties of dimension $1$ and $g-2$ respectively. 
Then $(A,\Theta)$ is isomorphic to the Jacobian of a smooth curve $C$. 
Moreover, if we identify $\Theta$ with $W_{g-1}(C)$, there are rational maps $f_1:Z_1\dashrightarrow W_1(C)$ and $f_2:Z_2\dashrightarrow W_{g-2}(C)$ with $F=f_1+f_2$.
\end{corollary}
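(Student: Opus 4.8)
The plan is to reduce Corollary \ref{cor:CxY} to Theorem \ref{thm:Theta} via the structure theory already developed. First I would pass to resolutions: replace $Z_1$ and $Z_2$ by smooth projective models $\widetilde Z_1$ and $\widetilde Z_2$, so that the dominant rational map becomes a morphism $\widetilde Z_1\times\widetilde Z_2\longrightarrow A$ (using that $A$ contains no rational curves, exactly as in Lemma \ref{lem:F:X1xX2}). By Lemma \ref{lem:F:X1xX2} this morphism is of the form $f_1+f_2$ with $f_i:\widetilde Z_i\longrightarrow A$. Setting $C':=\overline{f_1(\widetilde Z_1)}$ and $Y':=\overline{f_2(\widetilde Z_2)}$, the image of $F$ is $C'+Y'$, and since $F$ is dominant onto $\Theta$ we get $\Theta=C'+Y'$. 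Here $C'$ is a curve (it is the image of a curve and is not a point, since otherwise $\Theta=Y'$ would have dimension $\le g-2$); and $\dim Y'\le g-2$. Because $\dim\Theta=g-1$ and $\dim(C'+Y')\le \dim C'+\dim Y'\le 1+(g-2)=g-1$, equality forces $\dim Y'=g-2$.

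Next I would invoke Theorem \ref{thm:Theta} directly: $(A,\Theta)$ is an indecomposable $g$-dimensional ppav with $\Theta=C'+Y'$ a sum of a curve and a codimension-two subvariety, so $C'$ is smooth, there is an isomorphism $\psi:(A,\Theta)\stackrel{\sim}\longrightarrow (J(C'),\Theta_{C'})$, and under $\psi$ the subvarieties $C'$ and $Y'$ become translates of $W_1(C')$ and $W_{g-2}(C')$ respectively. This already gives the first assertion, that $(A,\Theta)$ is the Jacobian of the smooth curve $C:=C'$. Identifying $\Theta$ with $W_{g-1}(C)$ via $\psi$ (and absorbing the translation), we then have $f_1:\widetilde Z_1\longrightarrow A$ with image a translate of $W_1(C)$, and $f_2:\widetilde Z_2\longrightarrow A$ with image a translate of $W_{g-2}(C)$.

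For the ``moreover'' part, I would translate $f_1$ and $f_2$ by constants $a_1,a_2\in A$ with $a_1+a_2=0$ so that the image of $f_1$ is exactly $W_1(C)$ and the image of $f_2$ is exactly $W_{g-2}(C)$; then $F=f_1+f_2$ still holds. Composing with the original birational maps $Z_i\dashrightarrow\widetilde Z_i$ gives rational maps $f_1:Z_1\dashrightarrow W_1(C)$ and $f_2:Z_2\dashrightarrow W_{g-2}(C)$ with $F=f_1+f_2$, which is the claimed conclusion.

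The only genuinely substantive input is Theorem \ref{thm:Theta} itself; everything else is bookkeeping. The main point requiring a little care is the dimension count showing that $C'$ is honestly a curve and $Y'$ honestly a $(g-2)$-fold — i.e.\ that the summand dimensions cannot drop — which follows because the image of the product under an addition morphism has dimension at most the sum of the factor dimensions, and here that sum equals $\dim\Theta$. A second minor subtlety is that Lemma \ref{lem:F:X1xX2} is stated for maps to $A$ rather than to $\Theta$, so one must first extend the rational map $\widetilde Z_1\times\widetilde Z_2\dashrightarrow\Theta\subseteq A$ to the morphism regime using the absence of rational curves in $A$, exactly as in the proof of that lemma; I would simply cite Lemma \ref{lem:F:X1xX2}. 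No step here is a real obstacle.
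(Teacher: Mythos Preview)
Your proposal is correct and follows essentially the same route as the paper: resolve singularities, apply Lemma \ref{lem:F:X1xX2} to split $F$ as $f_1+f_2$, deduce $\Theta=f_1(Z_1)+f_2(Z_2)$, and invoke Theorem \ref{thm:Theta}. The only difference is that you spell out the dimension count for $C'$ and $Y'$ and the translation argument for the ``moreover'' clause, which the paper leaves implicit.
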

\begin{proof}
After resolving the singularities of $Z_1$ and $Z_2$, we may assume that both varieties are smooth.
By Lemma \ref{lem:F:X1xX2}, $F:Z_1\times Z_2 \dashrightarrow \Theta\subseteq A$ is then a sum of morphisms $f_1:Z_1\longrightarrow A$ and $f_2:Z_2\longrightarrow A$.
Hence, 
\[
f_1(Z_1)+f_2(Z_2)=\Theta ,
\] 
and so Corollary \ref{cor:CxY} follows from Theorem \ref{thm:Theta}.
\end{proof}

\begin{remark}
For an arbitrary ppav $(A,\Theta)$, Corollary \ref{cor:ThetaDPC} implies that each component of $\Theta$ is DPC if and only if $(A,\Theta)$ is a product of Jacobians of smooth curves.
Indeed, if $(A,\Theta)=(A_1,\Theta_1)\times\dots \times(A_r,\Theta_r)$ with indecomposable factors $(A_i,\Theta_i)$, then $\Theta$ has $r$ components which are isomorphic to $\Theta_i\times \prod_{j\neq i}A_j$ where $i=1,\dots ,r$.
A product of varieties is DPC if and only if each factor is DPC.
Since abelian varieties are DPC, it follows that the components of $\Theta$ are DPC if and only if each $\Theta_i$ is DPC, hence the result by Corollary \ref{cor:ThetaDPC}. 
\end{remark}

\begin{corollary} \label{cor:Fano}
The Fano surface of lines on a smooth cubic threefold $X\subseteq \CP^4$ is not dominated by a product of curves.
\end{corollary}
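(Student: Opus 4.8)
The plan is to combine Corollary \ref{cor:ThetaDPC} with the classical result of Clemens--Griffiths \cite{clemens-griffiths} on the geometry of the Fano surface. Let $X\subseteq\CP^4$ be a smooth cubic threefold, let $(JX,\Theta)$ be its intermediate Jacobian, and let $S\subseteq JX$ denote the Fano surface of lines on $X$, embedded via the Abel--Jacobi map (after choosing a base point). Clemens and Griffiths proved two facts we shall use: first, $(JX,\Theta)$ is an indecomposable principally polarized abelian variety of dimension $5$ which is \emph{not} isomorphic to the Jacobian of any smooth curve (this is their solution of the Lüroth-type problem, \cite[Sec.\ 13]{clemens-griffiths}); second, the theta divisor decomposes as $\Theta=S+S$ (more precisely, $\Theta$ is a translate of $S-S$, equivalently of $S+S$ after adjusting by $-1_{JX}$ and a translation), so that $S$ is a surface summand of $\Theta$; see again \cite[Sec.\ 13]{clemens-griffiths}.

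First I would record that by Corollary \ref{cor:ThetaDPC}, since $(JX,\Theta)$ is indecomposable and not a Jacobian of a smooth curve, the theta divisor $\Theta$ is \emph{not} DPC. Next, suppose for contradiction that $S$ were dominated by a product of curves, say there is a dominant rational map $C_1\times C_2\dashrightarrow S$ with $C_1,C_2$ smooth projective curves. Composing with the inclusion $S\hookrightarrow JX$ and invoking Lemma \ref{lem:F:X1xX2}, this map is a sum of morphisms $f_i\colon C_i\to JX$, so $S=f_1(C_1)+f_2(C_2)$ is a sum of two curves in $JX$; in particular $S$ has a curve summand. Now form the four-fold sum $C_1\times C_2\times C_1\times C_2\dashrightarrow S+S=\Theta_a$ for a suitable translate, which (again composing with translation, an isomorphism) exhibits $\Theta$ as dominated by a product of four curves. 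This contradicts the conclusion of the previous paragraph that $\Theta$ is not DPC.

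A slightly cleaner route, which I would actually present, is to skip the intermediate DPC statement and apply Theorem \ref{thm:Theta} (or Corollary \ref{cor:CxY}) directly: if $S$ had a curve summand then $\Theta=S+S$ would have a curve summand, namely $\Theta = C + (C' + S)$ where $S = C + C'$, with $C$ a curve and $C'+S$ of dimension $g-2 = 3$; Theorem \ref{thm:Theta} then forces $(JX,\Theta)$ to be the Jacobian of a smooth curve, contradicting Clemens--Griffiths. Either way, the only real content beyond the already-established theorems is the input from \cite{clemens-griffiths}, and the main (mild) obstacle is simply to quote the decomposition $\Theta = S+S$ and the non-Jacobian property in the precise form needed; both are standard and explicitly in \cite[Sec.\ 13]{clemens-griffiths}. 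The rest is the bookkeeping of Lemma \ref{lem:F:X1xX2} to pass from "dominated by a product of curves" to "has a curve summand," which is exactly Corollary \ref{cor:curvesummand}.
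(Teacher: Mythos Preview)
Your proposal is correct and follows essentially the same approach as the paper: use Clemens--Griffiths to write $\Theta$ as dominated by $S\times S$ and to know that $(J^3(X),\Theta)$ is indecomposable and not a Jacobian, then observe that if $S$ were DPC the four-curve domination of $\Theta$ would contradict Corollary~\ref{cor:CxY} (equivalently Theorem~\ref{thm:Theta} or Corollary~\ref{cor:ThetaDPC}). The paper's proof is terser but the content is identical to your ``cleaner route.''
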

\begin{proof}
By \cite[Thm.\ 13.4.]{clemens-griffiths}, the theta divisor of the intermediate Jacobian $(J^3(X),\Theta)$ is dominated by the product $S\times S$, where $S$ is the Fano surface of lines on $X$.
Since $(J^3(X),\Theta)$ is indecomposable and not isomorphic to the Jacobian of a smooth curve \cite[p.\ 350]{clemens-griffiths}, Corollary \ref{cor:Fano} follows from Corollary \ref{cor:CxY}.  
\end{proof}

\subsection{Dominations of symmetric products of curves} \label{subsec:martens}
Theorem \ref{thm:Theta} is nontrivial even in the case where $(A,\Theta)$ is known to be a Jacobian.
This allows us to classify all possible ways in which the symmetric product $C^{(k)}$ of a smooth curve $C$ of genus $g\geq k+1$ can be dominated by a product of curves.
Before we explain the result, we should note that 
$\AJ_k:C^{(k)}\longrightarrow W_k(C)$ is a birational morphism for $g\geq k$, and that $-W_{g-1}(C)$ is a translate of $W_{g-1}(C)$.
In particular, multiplication by $-1$ on $J(C)$ induces a nontrivial birational automorphism 
\[
\iota:C^{(g-1)}\stackrel{\sim}\dashrightarrow C^{(g-1)} .
\] 

\begin{corollary} \label{cor:C^(k)}
Let $C$ be a smooth curve of genus $g$.
Suppose that for some $k\leq g-1$, there are smooth curves $C_1,\dots ,C_{k}$ together with a dominant rational map 
\[
F:C_1\times\dots \times C_{k} \dashrightarrow C^{(k)} .
\]
Then there are dominant morphisms $f_i:C_i\longrightarrow C$ with the following property:
\begin{itemize}
	\item If $k<g-1$, then $F=f_1+\dots +f_k$.
	\item If $k=g-1$, then $F=f_1+\dots +f_{g-1}$ or $F=\iota\circ \left(f_1+\dots +f_{g-1}\right)$.
\end{itemize}
\end{corollary}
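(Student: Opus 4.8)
The plan is to reduce Corollary \ref{cor:C^(k)} to Corollary \ref{cor:CxY} (when $k = g-1$) and to Theorem \ref{thm:Theta} together with an inductive argument (when $k < g-1$). First I would resolve singularities of the $C_i$ (they are curves, so this is automatic) and pass through the Albanese: since $C^{(k)}$ for $k \leq g-1$ admits the birational morphism $\AJ_k : C^{(k)} \to W_k(C) \subseteq J(C)$, any dominant rational map $F : C_1 \times \dots \times C_k \dashrightarrow C^{(k)}$ composes with $\AJ_k$ to give a dominant rational map to $W_k(C)$, which by Lemma \ref{lem:F:X1xX2} is a sum $g_1 + \dots + g_k$ of morphisms $g_i : C_i \to J(C)$, up to translation. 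So the image of $g_1 + \dots + g_k$ is $W_k(C)$ (up to translation), and each $g_i(C_i)$ is a curve summand.

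For the case $k = g-1$, the variety $W_{g-1}(C) = \Theta_C$ is the theta divisor, so Corollary \ref{cor:CxY} applies directly with $Z_1 = C_1$ and $Z_2 = C_2 \times \dots \times C_{g-1}$ (a variety of dimension $g-2$), giving a splitting $F = h_1 + h_2$ where $h_1 : C_1 \dashrightarrow W_1(C)$ and $h_2 : C_2 \times \dots \times C_{g-1} \dashrightarrow W_{g-2}(C)$. Iterating this — peeling off one curve factor at a time, each time recognizing $W_j(C)$ as a theta divisor summand inside $J(C)$ via $W_j(C) + W_{g-1-j}(C) = W_{g-1}(C)$ up to translation, and applying Lemma \ref{lem:W=W_d} — I obtain dominant morphisms $f_i : C_i \to W_1(C) = C$ with $g_1 + \dots + g_{g-1}$ equal to $f_1 + \dots + f_{g-1}$ up to translation, hence $\AJ_{g-1} \circ F = f_1 + \dots + f_{g-1}$ up to translation. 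The final subtlety is that $\AJ_{g-1}$ is only birational, not an isomorphism: $-W_{g-1}(C)$ is again a translate of $W_{g-1}(C)$, so the identification of $W_{g-1}(C)$ with $C^{(g-1)}$ via $\AJ_{g-1}$ can be composed with multiplication by $-1$, which is exactly the automorphism $\iota$. This accounts for the two cases $F = f_1 + \dots + f_{g-1}$ or $F = \iota \circ (f_1 + \dots + f_{g-1})$. For $k < g-1$ the morphism $\AJ_k$ is genuinely birational with no competing involution (since $-W_k(C)$ is not a translate of $W_k(C)$ for $k < g-1$, as its theta dual $-W_{g-1-k}(C)$ would otherwise force $C$ to be very special — one checks this using $\NS$ or the Gauss map), so only $F = f_1 + \dots + f_k$ survives.

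The main obstacle I expect is the inductive peeling step: at each stage I need to know that the summand $g_{j+1}(C_{j+1}) + \dots + g_{g-1}(C_{g-1})$ of $\Theta_C$ is a translate of the standard $W_{g-1-j}(C)$, and that its complementary summand $g_1(C_1) + \dots + g_j(C_j)$ is a translate of $W_j(C)$. Corollary \ref{cor:CxY} gives this for one split, but to feed it back into the induction I must recognize $g_1(C_1) + \dots + g_j(C_j) = W_j(C)_a$ as itself a theta divisor inside the Jacobian $J(C)$ — which it is not literally, since it has dimension $j < g-1$. The correct framing is: I keep the ambient theta divisor $\Theta_C$ fixed and repeatedly apply Corollary \ref{cor:CxY} to the \emph{same} $\Theta_C$ but with a regrouped product decomposition $C_1 \times (C_2 \times \dots \times C_{g-1})$, then $C_2 \times (C_3 \times \dots \times C_{g-1})$ acting on the curve-summand-complement, using Lemma \ref{lem:W=W_d} to identify each successive $W_{g-1-j}(C)$ summand. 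So the induction is really on the complement, and Lemma \ref{lem:W=W_d} is the workhorse that makes each step go through; tracking the translations carefully and verifying that $g_i(C_i)$ is a translate of $W_1(C) = C$ (not merely some curve of the right class) is where the bookkeeping concentrates.
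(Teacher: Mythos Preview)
Your iterative peeling has a genuine gap at the sign-consistency step. Each application of Corollary \ref{cor:CxY} (equivalently Theorem \ref{thm:Theta}) to $\Theta_C=g_i(C_i)+(\text{rest})$ produces an isomorphism $(J(C),\Theta_C)\cong(J(C'),\Theta_{C'})$ which, after Torelli, is an automorphism of the ppav $(J(C),\Theta_C)$; modulo $\Aut(C)$ and translation this is $\pm\id$. So what you actually obtain from each application is that $g_i(C_i)$ is a translate of $W_1(C)$ \emph{or} of $-W_1(C)$, and different applications may return different signs because they may use different identifications. Lemma \ref{lem:W=W_d} does not close this: it identifies the complement of a known $W_d(C)$-summand of $\Theta_C$ as a translate of $W_{g-d-1}(C)$, but it does not exclude $-W_1(C)$ from being a summand of $W_{g-2}(C)$, which is precisely what a mixed-sign configuration produces. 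Concretely, if $g_1(C_1)$ is a $W_1(C)$-translate and $g_2(C_2)$ is a $(-W_1(C))$-translate, your inductive hypothesis gives that $-W_1(C)+\sum_{i\ge 3}g_i(C_i)$ is a translate of $W_{g-2}(C)$, and nothing in your outline rules this out. This is not bookkeeping; it is the substantive step.

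The paper takes a different route. After writing $\Theta_C=C_1'+\dots+C_k'+W_{g-k-1}(C)$ it invokes Proposition \ref{prop:a=1} to get the minimal class $[C_i']=\frac{1}{(g-1)!}[\Theta_C]^{g-1}$ for every $i$ at once, then Debarre's theorem to conclude that each $C_i'$ is a translate of $\pm W_1(C)$. The mixed-sign case is then eliminated by a cohomology class computation: if exactly $r$ of the $C_i'$ carry a minus sign, a translate of $\Theta_C$ equals $W_{g-r-1}(C)-W_r(C)$, whose class is $\binom{g-1}{r}[\Theta_C]$; this forces $r\in\{0,g-1\}$. Since $r\le k$, the case $r=g-1$ occurs only when $k=g-1$ and is exactly the $\iota$ alternative. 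Your remark that $-W_k(C)$ is not a translate of $W_k(C)$ for $k<g-1$ is the wrong diagnostic (and is false for hyperelliptic $C$, which must be treated): it speaks to whether a \emph{global} sign flip of $F$ is available, not to whether the individual curve summands can carry opposite signs.
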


\begin{proof} 
We use the birational morphism $\AJ_k:C^{(k)}\longrightarrow W_k(C)$ to identify $C^{(k)}$ birationally with its image $W_k(C)$ in $J(C)$.
By Lemma \ref{lem:F:X1xX2}, the rational map
\[
\AJ_k \circ F: C_1\times\dots \times C_{k} \dashrightarrow W_k(C)
\] 
is a sum of morphisms $C_i\longrightarrow W_k(C)$. 
If $C'_i$ denotes the image of $C_i$ in $J(C)$, then
\begin{align} \label{eq:sumC_i'}
\Theta_C=C_1'+\dots + C_k'+W_{g-k-1}(C)
\end{align}
by Riemann's Theorem.
Proposition \ref{prop:a=1} yields therefore 
$
[C'_i]=\frac{1}{(g-1)!}[\Theta_C]^{g-1}
$ 
for all $i$. 
It follows for instance from Debarre's Theorem \cite{debarre} that each $C'_i$ is a translate of $C$ or of $-C$, where $C\subseteq J(C)$ is identified with its Abel--Jacobi image.
If $C$ is hyperelliptic, Corollary \ref{cor:C^(k)} follows.
 
Assume now that $C$ is non-hyperelliptic.
Then there is some $0\leq r\leq k$, such that $C_i$ is a translate of $-C$ for precisely $r$ many indices $i\in \left\{1,\dots ,k\right\}$. 
By (\ref{eq:sumC_i'}), $W_{g-r-1}(C)-W_r(C)$ is then a translate of $\Theta_C$.
However, Lemma 5.5 in \cite{debarre} yields
\[
[W_{g-r-1}(C)-W_r(C)]=\binom{g-1}{r}\cdot [\Theta_C] ,
\]
which coincides with $[\Theta_C]$ if and only if $r=0$ or $r=g-1$.
This proves Corollary \ref{cor:C^(k)}.
\end{proof}

Corollary \ref{cor:C^(k)} implies a theorem of Martens \cite{martens,ran3} asserting that any birational map
\[
C_1^{(k)}\stackrel{\sim}\dashrightarrow C_2^{(k)}
\] 
between the $k$-th symmetric products of smooth curves $C_1$ and $C_2$ of genus $g \geq k+2$ is induced by an isomorphism $C_1\stackrel{\sim}\longrightarrow C_2$.

For $k\geq g$, the symmetric product $C^{(k)}$ is birational to $J(C)\times \CP^{k-g}$.
This shows that Corollary \ref{cor:C^(k)} is sharp as for $k\geq g$, the product $J(C)\times \CP^{k-g}$ admits a lot of nontrivial dominations.
For instance, it is dominated by $k-g$ arbitrary curves (whose product dominates $\CP^{k-g}$) together with any choice of $g$ curves in $J(C)$ whose sum is $J(C)$.

\section*{Acknowledgment}
I would like to thank my advisor D.\ Huybrechts for constant support, encouragement and discussions about the DPC problem. 
Thanks go also to C.\ Schnell for his lectures on generic vanishing theory, held in Bonn during the winter semester 2013/14, where I learned about GV-sheaves and Ein--Lazarsfeld's result \cite{ein-laz}.
I am grateful to J.\ Fresan, D.\ Kotschick, L.\ Lombardi and M.\ Popa for useful comments.
Special thanks to the anonymous referee for helpful comments and corrections.
The author is member of the BIGS and the SFB/TR 45 and supported by an IMPRS Scholarship of the Max Planck Society.

\end{document}